\newtheorem{thm}{Theorem}
\newtheorem{lem}[thm]{Lemma}
\newtheorem{corol}[thm]{Corollary}
\newtheorem{prop}[thm]{Proposition}
\newtheorem*{thm*}{Theorem}
\theoremstyle{definition}
\newcommand{\sO}{\mathscr{O}}
\newcommand{\sD}{\mathscr{D}}
\newcommand{\sU}{\mathscr{U}}
\newcommand{\sQ}{\mathscr{Q}}
\newcommand{\sV}{\mathscr{V}}
\newcommand{\cA}{\mathcal{A}}
\newcommand{\cB}{\mathcal{B}}
\newcommand{\cC}{\mathcal{C}}
\newcommand{\cE}{\mathcal{E}}
\newcommand{\cH}{\mathcal{H}}
\newcommand{\cS}{\mathcal{S}}
\newcommand{\cT}{\mathcal{T}}
\newcommand{\cK}{\mathcal{K}}
\newcommand{\cL}{\mathcal{L}}
\newcommand{\cQ}{\mathcal{Q}}
\DeclareMathOperator{\Ext}{Ext}
\DeclareMathOperator{\Db}{D^b}
\newcommand{\ZZ}{\mathbb Z} \newcommand{\CC}{\mathbb C}
\newcommand{\NN}{\mathbb N} \newcommand{\PP}{\mathbb P}
\newcommand{\OO}{\mathbb O}
\newcommand{\bR}{\mathbf{R}}
\newcommand{\bL}{\mathbf{L}}
\DeclareMathOperator{\ts}{\otimes}
\newcommand{\xr}{\xrightarrow}
\def\a{\alpha}\def\om{\omega}
\def\gr{\mathrm{gr}}
\begin{document}

\sloppy


\title{On the derived category of the Cayley plane II}

\author{Daniele Faenzi}
\email{{\tt daniele.faenzi@univ-pau.fr}}
\address{Universit\'e de Pau et des Pays de l'Adour \\
 Avenue de l'Universit\'e - BP 576 - 64012 PAU Cedex - France}
\urladdr{{\url{http://univ-pau.fr/~faenzi/}}}

\author{Laurent Manivel}
\email{{\tt laurent.manivel@ujf-grenoble.fr}}
\address{Universit\'e de Grenoble \\
  BP 74, 38402 Saint-Martin d'H\`eres, France}
\urladdr{{\url{http://www-fourier.ujf-grenoble.fr/~manivel/}}}

\thanks{D.F. partially supported by ANR projects INTERLOW and
  GEOLMI}
\keywords{Cayley plane, derived category, full strongly exceptional collection}
\subjclass[2010]{14F05; 14J60; 14M17}


\begin{abstract}
We find a full strongly exceptional collection for the
Cayley plane $\OO\PP^2$, the simplest rational homogeneous space of
the exceptional group $E_6$.
This collection, closely related to the
one given by the second author in \cite{manivel:cayley},
consists of $27$ vector bundles which are homogeneous for the
group $E_6$, and is a Lefschetz collection with respect to the minimal
equivariant embedding of $\OO\PP^2$. 
\end{abstract}

\maketitle

\section*{Introduction}

\subsection*{The Cayley plane}
Let $X=\OO \PP^2$ be the Cayley plane. This is the closed orbit in the
projectivization of the minimal representation of
the complex simply connected exceptional Lie group $E_6$.
The Cayley plane can also be identified with the quotient $E_6/P_1$, where
$P_1$ is the parabolic subgroup of $E_6$ corresponding to the root
$\alpha_1$ of $E_6$.
This is sketched in the diagram below, where
the fundamental roots $\a_1\ldots,\a_6$ of $E_6$ are depicted.
We will denote by $\om_1,\ldots,\om_6$ the corresponding fundamental weights.

\begin{center} 
\setlength{\unitlength}{5mm}
\begin{picture}(20,5)(-6.5,0) 
\multiput(-.34,3.8)(2,0){5}{$\circ$}
\multiput(0,4)(2,0){4}{\line(1,0){1.7}}
\put(3.7,1.8){$\circ$}
\put(-.3,3.8){$\bullet$}
\put(-.6,4.4){$\a_1$}
\put(1.4,4.4){$\a_2$}
\put(3.4,4.4){$\a_3$}
\put(5.4,4.4){$\a_5$}
\put(7.4,4.4){$\a_6$}
\put(3.45,1.3){$\a_4$}
\put(3.89,2.15){\line(0,1){1.73}}
\end{picture}
\end{center}

The aim of this note is to provide a full strongly exceptional
collection of the derived category of coherent sheaves on the Cayley plane.
This completes \cite{manivel:cayley}, where a strongly exceptional
collection closely related to ours was found. But it was not 
proved that this collection generates the whole derived category. 

Our proof of this missing point has two important ingredients. 
On the one hand, we use specific tensor relations between 
certain homogeneous bundles on the Cayley plane, to show that some 
of these bundles belong to the category generated by our strongly
exceptional collection.
This will be the done in Section \ref{1}.
This allows us, on the other hand, to 
use restriction to certain specific subvarieties of the Cayley
plane, its maximal quadrics, for which we know a full 
exceptional collection of the derived category. 
We will do this in Section \ref{2}.
Section \ref{3} contains the proof of our result.

\subsection*{The octonionic plane geometry}
These maximal, 8-dimensional quadrics are called $\OO$-lines 
since they are copies of $\OO \PP^1$. They define a (slightly degenerate) plane projective 
geometry over the Cayley octonions. Let us recall briefly the 
ingredients from this geometry we will use in this note. For more
details see \cite{iliev-manivel:78}. 

The Cayley plane $\OO\PP^2\subset\PP V_{\om_1}$ has a twin 
$\OO\check{\PP}^2\subset\PP V_{\om_6}=\PP V_{\om_1}^*$. Both 
varieties are isomorphic as embedded projective varieties,
but not equivariantly. If $x$ is a point of $\OO\PP^2$, 
the orthogonal to its tangent space in $\PP V_{\om_1}^*$ cuts
the dual Cayley plane $\OO\check{\PP}^2$ along a smooth
8-dimensional quadric $\check{Q}_x$. Symmetrically, each 
point $\ell$ in $\OO\check{\PP}^2$ defines a smooth  
8-dimensional quadric $Q_\ell$ in the Cayley plane.
 
These quadrics are called $\OO$-lines because generically, 
they have the characteristic properties of 
a plane projective geometry: two general 
$\OO$-lines meet in a single point, and through two general 
points pass a unique $\OO$-line. They can also be considered
as {\it entry-loci} in the following sense. The secant 
variety of $\OO\PP^2$ is a cubic hypersurface in $\PP V_{\om_1}$
which we call the Cartan cubic \cite{iliev-manivel:78}. This 
secant variety is degenerate (and therefore equal 
to the tangent variety of the Cayley plane): this means that 
a general point $y$ of the Cartan cubic belongs to infinitely
many secant lines to the Cayley plane; in fact, if $y$ itself
does not belong to $\OO\PP^2$, the intersection locus 
of these secants with the Cayley plane, traditionally called 
the entry-locus of $y$, is a smooth 8-dimensional 
quadric, 
and in fact an $\OO$-line.

\subsection*{The main result}
To state our main result, we need to setup some material.
The category of homogeneous bundles on the Cayley plane is equivalent
to the category of $P_1$-modules. Let $P_1=L P^u$ be a Levi
decomposition, where $P^u$ is the unipotent radical and $L$ is
reductive.
The center of the Levi factor $L$ is one-dimensional. The quotient 
of $L$ by its center is the semisimple part of $P_1$, and is
isomorphic to $\rm{Spin}_{10}$. 

The unipotent radical $P^u$ has to act trivially on an 
irreducible $P_1$-module, which is therefore completely 
determined by its $L$-module structure.  
An irreducible $L$-module is determined by its highest weight, 
a $L$-dominant weight of $E_6$. The set $L$-dominant weights is 
the set of linear combinations $\om = a_1\om_1 +\cdots + a_6 \om_6$, 
with $a_2,\ldots,a_6 \in \NN$ and $a_1 \in\ZZ$.
We denote by $\cE_\om$ the homogeneous bundle on $X$ corresponding to
the irreducible $L$-module determined by the weight $\om$.

Set $\cS = \cE_{\om_6}$ and $\cS_2 = \cE_{2\om_6}$.
By the Borel-Weil theorem $H^0(X,\cS) \cong V_{\om_6}^* \cong V_{\om_1}$ and
$\cS$ is generated by its global section. In particular for any
point $x$ in the Cayley plane the fibre $\cS_x^*$ of the dual 
bundle can be considered as a subspace of $V_{\om_6}=V_{\om_1}^*$,
and this subspace is nothing else than the linear span of the 
$\OO$-line $\check{Q}_x$. 
 
Recall from \cite{manivel:cayley} that $\cS^* \cong \cS(-1)$ 
and $\cS_2^* \cong \cS_2(-2)$.
Consider the collections:
\begin{align*}
   & \cA = (\cS_2^*, \cS^*, \sO_X), \\
   & \cC = (\cS^*, \sO_X).
\end{align*}

\begin{thm*}
  The Cayley plane admits the following full strongly exceptional
  collection:
  \[
  \Db(\OO\PP^2)=\langle \cA, \cA(1), \cA(2), \cC(3), \cC(4),\ldots, \cC(11)\rangle.
  \]
\end{thm*}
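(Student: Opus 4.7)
My strategy is to verify strong exceptionality via Borel--Weil--Bott, and then prove fullness in two stages: first enlarge the subcategory generated by the collection using tensor identities between homogeneous bundles (Section~\ref{1}), and then conclude by restriction to the $\OO$-lines (Sections~\ref{2}--\ref{3}).

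\textbf{Strong exceptionality.} For an ordered pair $(F,G) = (\cE_\omega(i), \cE_{\omega'}(j))$ in the collection, the Ext group $\Ext^{\bullet}_X(F,G) = H^\bullet(X, \cE_\omega^\vee \otimes \cE_{\omega'}(j-i))$ must vanish in positive degree and be $\CC$ or $0$ in degree zero. Decomposing $\cE_\omega^\vee \otimes \cE_{\omega'}$ into irreducible $L$-summands reduces this to a finite list of Borel--Weil--Bott computations on $E_6/P_1$. The present collection differs from the one in \cite{manivel:cayley} only slightly, so the bulk of the verification is already available there, with only a handful of new vanishings to check.

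\textbf{Tensor identities (Section~\ref{1}).} Let $\cT$ denote the triangulated subcategory of $\Db(X)$ generated by our collection. The plan is to decompose tensor products such as $\cS \otimes \cS^*$, $\cS_2 \otimes \cS^*$, $\cS^* \otimes \cS_2^*$ into irreducible $L$-modules; these decompositions translate, via $L$-equivariance, into short exact sequences of homogeneous bundles on $X$. When the outer terms of such a sequence are known to lie in $\cT$, the middle term does too. Iterating, one enlarges the list of homogeneous bundles in $\cT$ to include the bundles required in the next step.

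\textbf{Restriction to $\OO$-lines and conclusion (Sections~\ref{2}--\ref{3}).} Each $\OO$-line $Q_\ell$ is a smooth 8-dimensional quadric, so Kapranov's theorem endows $\Db(Q_\ell)$ with a full exceptional collection consisting of the two spinor bundles $\Sigma_\ell^\pm$ together with $\sO(0), \ldots, \sO(7)$. The restrictions $\cS|_{Q_\ell}$ and $\cS_2|_{Q_\ell}$ are readily identified using the description of $\cS^*_x$ as the linear span of $\check Q_x$. Combined with a resolution of $\sO_{Q_\ell}$ on $X$ by homogeneous bundles, this expresses each pushforward $i_{\ell*}F$, for $F$ in Kapranov's collection on $Q_\ell$, as a complex of homogeneous bundles lying in $\cT$ by Section~\ref{1}; hence $i_{\ell*}F \in \cT$. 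For any $E \in \cT^\perp$, adjunction gives
\[
\Hom^\bullet_{Q_\ell}(F, i_\ell^! E) = \Hom^\bullet_X(i_{\ell*}F, E) = 0
\]
for every $F$ in the full exceptional collection on $Q_\ell$, so $i_\ell^! E = 0$, equivalently $i_\ell^* E = 0$. Since the $\OO$-lines cover $X$, $E$ has empty support, hence $E=0$ and $\cT = \Db(X)$. The main obstacle is Section~\ref{1}: one must produce enough tensor identities and Koszul-type resolutions so that every term appearing in the expressions for the pushforwards $i_{\ell*}F$ can be traced back to objects in $\cT$.
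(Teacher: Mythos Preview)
Your overall architecture matches the paper's, but the restriction step contains a genuine gap. You assert a ``resolution of $\sO_{Q_\ell}$ on $X$ by homogeneous bundles'' without saying what it is; in fact the $8$-dimensional $\OO$-lines are \emph{not} zero loci of regular sections of any obvious homogeneous bundle of rank $8$, so there is no ready-made Koszul resolution to feed into Section~\ref{1}. The paper avoids this by dropping one dimension: since $c_{10}(\cS)=0$, a general section of $\cS$ is nowhere vanishing and yields a rank-$9$ quotient $\cS_s$; a general section of $\cS_s$ then vanishes on a disjoint union $\sQ$ of three smooth $7$-dimensional quadrics, and the Koszul complex of that section resolves $j_*\sO_\sQ$ by $\wedge^p\cS_s^*$, whose terms are handled by the lemmas of Section~\ref{1}.

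Even with this resolution, the second half of your plan is too optimistic. The range of twists available in the collection only allows one to show that $j_*\sO_\sQ(t)\in\sD$ for $5\le t\le 10$ and $j_*\Sigma_\sQ(6)\in\sD$, i.e.\ seven of the eight generators of $\Db(Q_7)$; the eighth, $\sO_{Q_7}(11)$, is \emph{not} obtained this way. Hence for $E\in{}^\perp\sD$ one only concludes $\bL i_{Q_7}^*E\cong\bigoplus_k\sO_{Q_7}(11)^{r_k}[-k]$, not that it vanishes. The paper then needs two further steps you do not mention: a splitting criterion to propagate this from generic $Q_7$ to every $Q_8$ (giving $\bL i_{Q_8}^*E\cong\bigoplus_k\sO_{Q_8}(11)^{r_k}[-k]$), and an incidence-variety argument over the family of $\OO$-lines through a fixed point to globalise to $E\cong\bigoplus_k\sO_X(11)^{r_k}[-k]$, which is finally forced to be zero because $\sO_X(11)\in\sD$. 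Your sketch skips both of these, and the shortcut ``$i_\ell^*E=0$ for all $\ell$ implies $E=0$'' is not available because the hypothesis cannot be established.
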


This is a Lefschetz collection in the sense of \cite{kuznetsov:hpd}. 
It was used in \cite{iliev-manivel:78} to study the derived categories 
of 7-dimensional cubics. 

\subsection*{Acknowledgements}

The first author would like to thank Institut Fourier at Grenoble for
the warm hospitality during his visit at the origin of this work.

\section{Generating new bundles from our exceptional collection}
\label{1}

Given the exceptional collection appearing in our main result, we
denote by $\sD$ the full subcategory of $\Db(X)$ generated by it:
\[
\sD =  \langle \cA, \cA(1), \cA(2), \cC(3), \cC(4),\ldots, \cC(11) \rangle.
\]
One may ask, what are the sheaves on $X$ that do lie in $\sD$?
Of course, the goal of this note is to answer: {\it all of them}.
In this section, we will first prove that some particular sheaves lie
in $\sD$.
We will use these sheaves further on, to prove that the orthogonal of 
$\sD$ in $\Db(X)$ is zero, which amounts to our main result.

One of our main tools for getting new bundles from our generators of
$\sD$ will be the following 
classical observation. Given a $P_1$-module $E$, we can consider it as an $L$-module,
and then define a new $P_1$-module structure by extending trivially to 
$P^u$.  We get the associated graded module $\gr(E)$, which is a direct 
sum of irreducible $P_1$-modules. Moreover, $E$ can be reconstructed from
$\gr(E)$ by a series of extensions. 

If we consider an irreducible bundle $\cE_\om$ defined by an
$L$-dominant weight $\om$, the corresponding irreducible 
$\mathrm{Spin}_{10}$-representation has highest weight 
$\om = a_2 \om_2 +\cdots + a_6 \om_6$. Relations between representations
of $\mathrm{Spin}_{10}$ will therefore imply relations between
the corresponding homogeneous bundles. For example, the following
fact is general: if we consider a fundamental representation 
corresponding to an extremal node of some Dynkin diagram, its 
second skew-symmetric power contains the fundamental representation 
corresponding to the neighbouring node, and so on till we attain 
a triple node or a multiple edge. For $\mathrm{Spin}_{10}$ this 
is particularly neat since these inclusions are in fact always 
identities. In particular the fundamental representation corresponding
to the triple node can be written as a wedge power in three 
different ways. To get the correct relations between $L$-modules 
there remains to adjust the action of the one dimensional center, which 
can be done by computing the first Chern class. Once we cross a 
triple node, the wedge power contains (and will be equal in the 
cases we will be interested in) the irreducible representation whose
highest weight is the sum of the two fundamental weights beyond the 
triple node (this follows from Theorem 2.1 in \cite{landsberg-manivel:series}).
We get:

\begin{lem}\label{wedges}
The homogeneous bundles defined by the fundamental representations of $L$ can be described as:
 \begin{align*}
 &\cE_{\om_6}\cong\cS, && \cE_{\om_5}\cong\wedge^2\cS, && \cE_{\om_3}\cong\wedge^3\cS, \\
 &\cE_{\om_4}\cong\cT_X, && \cE_{\om_2}\cong\Omega_X(2), && \cE_{\om_3}\cong\wedge^2 \cT_X\cong\Omega^2_X(3).
 \end{align*}
Moreover, the wedge powers that come next are:
$$\wedge^4\cS\cong\cE_{\om_2+\om_4}, \qquad \wedge^3 \cT_X\cong\cE_{\om_2+\om_5}, \qquad \Omega^3_X(4)\cong\cE_{\om_4+\om_5}.$$
\end{lem}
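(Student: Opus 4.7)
The plan is to verify each identification in two steps: first, check the $\mathrm{Spin}_{10}$-module structure of the two sides using the representation theory of $D_5$; second, pin down the action of the one-dimensional centre of $L$---equivalently, the twist by a power of $\sO_X(1)$---by a first Chern class computation. Since $\mathrm{Pic}(X) = \ZZ \cdot \sO_X(1)$, each irreducible homogeneous bundle is determined up to isomorphism by its $\mathrm{Spin}_{10}$-type together with its first Chern class, so the two steps together suffice.

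For the first step, let $V$ denote the $10$-dimensional vector representation of $\mathrm{Spin}_{10}$. The classical $D_5$ identities give that $\wedge^k V$ is irreducible and isomorphic to the fundamental representation $V_{\om_k}^{D_5}$ for $k = 1, 2, 3$ (so long as we have not crossed the triple node), while $\wedge^4 V \cong V_{\om_4 + \om_5}^{D_5}$ once we do. Matching our labeling to the Bourbaki convention for $D_5$ (branch node $\leftrightarrow \a_3$, long chain $\a_3$--$\a_5$--$\a_6$, spinor ends $\a_2$ and $\a_4$), the successive $\wedge^k \cS$ for $k = 1, 2, 3, 4$ acquire the expected semisimple $L$-types of $\cE_{\om_6}$, $\cE_{\om_5}$, $\cE_{\om_3}$, and $\cE_{\om_2 + \om_4}$ respectively. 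Symmetrically, since $P_1$ is minuscule, the fibre of $\cT_X$ at the base point is the irreducible $L$-module $\mathfrak{g}/\mathfrak{p}_1$, a $16$-dimensional half-spin representation of $\mathrm{Spin}_{10}$, which identifies $\cT_X$ with $\cE_{\om_4}$ up to twist; the same mechanism then gives $\wedge^2 \cT_X$ the semisimple type of the branch-node representation $\cE_{\om_3}$, and $\wedge^3 \cT_X$ the type of $\cE_{\om_2 + \om_5}$ once we cross the branch.

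For the second step, the isomorphism $\cS^* \cong \cS(-1)$ together with $\mathrm{rk}\,\cS = 10$ yields $c_1(\cS) = -5\, c_1(\sO_X(1))$, whence the Chern classes of all the $\wedge^k \cS$ are determined. Comparing with the Chern class of each candidate $\cE_\om$---computed from the sum of the weights of the corresponding irreducible $L$-module---fixes the necessary twist; in the statement of the lemma these twists have been absorbed into the bundles $\Omega_X(2)$, $\Omega_X^2(3)$, and $\Omega_X^3(4)$. The isomorphism $\wedge^2 \cT_X \cong \Omega_X^2(3)$ is then consistent with $\cT_X^* = \Omega_X$ and $\cS^* \cong \cS(-1)$ via a direct substitution.

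The main obstacle is precisely this centre-action bookkeeping: deciding which of the two $16$-dimensional half-spin representations of $\mathrm{Spin}_{10}$ is realised by $\cT_X$ versus by $\Omega_X(2)$, and then tracking the twists consistently through all the wedge powers considered. Once the central characters are matched, each identification reduces to a routine Chern class check, because after fixing the semisimple type the only remaining freedom is the power of $\sO_X(1)$.
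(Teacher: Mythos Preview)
Your approach is essentially the same as the paper's: the lemma is stated there without a separate proof, the argument being the paragraph immediately preceding it, which exactly matches your two-step plan (identify the $\mathrm{Spin}_{10}$-type by walking along the $D_5$ diagram from an extremal node, then adjust the centre action via the first Chern class, and invoke the Landsberg--Manivel result once the triple node is crossed). One small slip: from $\cS^*\cong\cS(-1)$ and $\mathrm{rk}\,\cS=10$ one gets $c_1(\cS)=+5H$, not $-5H$; this does not affect the strategy.
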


We already mentioned that we also have $\cE_{\om_1}\cong\sO_X(1)$, the very ample line bundle which
defines the natural embedding of $X$ into $\PP V_{\om_1}$.  The twisted bundle
 $\cS(1)$ is the normal bundle of this embedding.
The restricted Euler sequence reads:
\begin{align}
\label{euler} & 0 \to \Omega_{\PP V_{\om_1} | X} \to V_{\om_6} \ts \sO_X(-1) \to \sO_X\to 0, \\ 
\intertext{while the conormal bundle sequence is:}
\label{normal} & 0 \to \cS^*(-1) \to \Omega_{\PP V_{\om_1} | X} \to \Omega_X \to 0.
\end{align}
We deduce that $\Omega_X$ and $\cT_X$ are the middle cohomology,
respectively, of the complexes:
\begin{align}
  \label{cotangent}
  & 0 \to \cS^*(-1) \to V_{\om_6} \ts \sO_X(-1) \to \sO_X \to 0, \\
  \label{tangent}
 & 0 \to \sO_X \to V_{\om_1} \ts \sO_X(1) \to \cS^*(2)  \to 0.
\end{align}

In the next lemmas we deal with some twisted symmetric and exterior powers of $\cS$.

\begin{lem} \label{6term}
  We have an exact sequence:
  \[
  \mbox{\small $
    0 \to \cS_2^* \to V_{\om_6} \ts \cS^* \to  (V_{\om_1} \oplus
  V_{\om_5}) \ts \sO_X \to (V_{\om_6} \oplus V_{\om_2}) \ts \sO_X(1) \to V_{\om_1} \ts \cS^*(2) \to \cS_2^*(3) \to 0.
  $}\]
  In particular, $\cS_2^*(t)$ lies in $\sD$ for $3 \le t \le 12$.
\end{lem}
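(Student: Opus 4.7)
Once the 6-term sequence is in hand, the claim $\cS_2^*(t)\in\sD$ for $3\le t\le 12$ follows by a short twist-and-induct argument. I would tensor the sequence with $\sO_X(t)$ for $t=0,1,\ldots,9$: the four middle terms become direct sums of $\cS^*(s)$ and $\sO_X(s)$ with $s\in\{t,t+1,t+2\}\subset\{0,\ldots,11\}$, so they all lie in $\sD$, appearing in the generators $\cC(s)$. The base cases $\cS_2^*(t)\in\sD$ for $t=0,1,2$ are built into the collection $\cA$, $\cA(1)$, $\cA(2)$. Splitting each twisted 6-term sequence into four short exact sequences and using that $\sD$ is triangulated yields the implication $\cS_2^*(t)\in\sD\Rightarrow\cS_2^*(t+3)\in\sD$; induction delivers $\cS_2^*(t)\in\sD$ for all $3\le t\le 12$.

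The bulk of the work is therefore the construction of the exact sequence itself. My plan is to build it from left to right, starting with the natural inclusion $\cS^*\hookrightarrow V_{\om_6}\otimes\sO_X$ obtained by twisting the cotangent complex (\ref{cotangent}) by $\sO_X(1)$. Tensoring with $\cS^*$ and restricting to the symmetric trace-free summand (via $S^2\cS^*\cong\cS_2^*\oplus\sO_X(-1)$) produces the first map $\cS_2^*\hookrightarrow V_{\om_6}\otimes\cS^*$. I would then compute its cokernel by decomposing $V_{\om_6}\otimes\cS^*$ into $L$-irreducible constituents, using the $L$-filtration of $V_{\om_6}\otimes\sO_X$ together with standard $\mathrm{Spin}_{10}$ tensor-product identities (the decomposition of $\cS\otimes\cS$ into $\cS_2$, $\wedge^2\cS\cong\cE_{\om_5}$ and a line-bundle summand, and the decomposition of $\Omega_X\otimes\cS^*$). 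Iterating the cokernel computation, representation-theoretic constraints force the next term of the complex to be the unique $E_6$-equivariant target compatible with $L$-equivariance: this produces the middle terms $(V_{\om_1}\oplus V_{\om_5})\otimes\sO_X$ and $(V_{\om_6}\oplus V_{\om_2})\otimes\sO_X(1)$, and the complex closes with a natural surjection $V_{\om_1}\otimes\cS^*(2)\twoheadrightarrow\cS_2^*(3)$ coming from the Jordan algebra multiplication on $V_{\om_1}$.

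The main obstacle is the representation-theoretic bookkeeping --- in particular, tracking the $\mathbb{C}^*$-center character of each $L$-irreducible summand, which determines its precise $\sO_X(t)$-twist in the complex. A secondary task is verifying exactness at each interior position; since every nontrivial $E_6$-equivariant morphism between irreducible homogeneous bundles is unique up to scalar, once the targets are identified the morphisms are forced, and exactness can then be checked either by a rank/character comparison at a single fiber or by applying Borel--Weil--Bott to show vanishing of the relevant higher cohomologies on $X$.
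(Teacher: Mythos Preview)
Your handling of the corollary ($\cS_2^*(t)\in\sD$ for $3\le t\le 12$) is correct and matches the paper. For the exact sequence your route differs: you propose to walk term by term from the left, computing successive cokernels via $L$-decompositions and letting $E_6$-equivariance pin down each map. The paper instead constructs only the \emph{left half}, terminating in a bundle $\cK$, and then proves $\cK\cong\cK^*(1)$ by showing via Bott's theorem that $\cK$ is completely reducible with summands each self-dual up to the twist; splicing the half-sequence with its own twisted dual yields the full six-term complex in one stroke and explains its visible palindromic symmetry. The geometric input is the polarised Cartan cubic, used to identify the kernel of $S^2\cS^*\to V_{\om_6}^*\otimes\sO_X$ as exactly $\cS_2^*$ (your Jordan-multiplication remark is in the same spirit, but you do not exploit the resulting duality). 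Your iterative plan is workable in principle, but ``representation-theoretic constraints force the next term'' glosses over the real work at the middle of the sequence --- after two steps you face a rank-$162$ bundle with four $L$-irreducible pieces and must still construct and verify the map to $(V_{\om_6}\oplus V_{\om_2})\otimes\sO_X(1)$; the paper's self-duality trick replaces that by a handful of Bott computations.
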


\begin{proof}
We start with the exact sequence:
\begin{align*}
   0 \to \cS^* \to V_{\om_6} \ts \sO_X \to \cQ \to 0
\end{align*}
and its symmetric square:
\begin{align*}
 0 \to S^2\cS^* \to V_{\om_6} \ts \cS^*\to \wedge^2V_{\om_6} \ts \sO_X\to \wedge^2\cQ \to 0.
\end{align*}
Consider the composition:
\begin{align*}
 S^2\cS^* \to V_{\om_6} \ts \cS^*\to S^2V_{\om_6} \ts \sO_X\to V_{\om_6}^* \ts \sO_X,
\end{align*}
where the rightmost morphism is defined by the polarization of the 
invariant cubic form on $V_{\om_1}$. We claim that the image 
 of this 
composition is the one dimensional factor in the decomposition $S^2\cS^*=\cS_2^*\oplus \sO_X(-1)$. 
Indeed, let $c$ be an equation of the Cartan cubic in $\PP V_{\om_1}$; we denote its polarisation
in the same way. We know that the (dual) 
Cayley plane is the singular locus of this hypersurface, and that it meets $\PP(\cS_x^*)$, for any
$x\in\OO\PP^2$, along the quadric $\check{Q}_x$. 
This means that we must have an identity of the form
\[
c(s,s,v)=\ell(v)q(s) \qquad \forall s\in  \cS_x^* \subset V_{\om_6},
\quad \forall v\in V_{\om_6},
\]
where $q$ is an equation of $\check{Q}_x$, and $\ell$ a linear form. This implies that the kernel
of the map  $S^2\cS_x^*\to V_{\om_6}^*$ is the hyperplane defined by $q$, which is by \cite{manivel:cayley}
the fibre at $x$ of the bundle $\cS_2^*$. 

We thus get a complex:
\begin{align}\label{one-half}
 0 \to \cS_2^* \to V_{\om_6} \ts \cS^*\to (\wedge^2 V_{\om_6}\oplus V_{\om_1}) \ts \sO_X\to \cK \to 0,
\end{align}
where the vector bundle $\cK$ fits into the exact sequence:
\begin{align}\label{suiteK}
 0 \to  V_{\om_1}/ \sO_X(-1) \to \cK \to \wedge^2\cQ \to 0,
\end{align}
where $V_{\om_1}/ \sO_X(-1)$ is the restriction to $X$ of $\cT_{\PP V_{\om_1}}(-1)$.
Note that the rightmost term of this sequence is itself an extension:
\begin{align*}
 0 \to  \cT_X(-1) \to V_{\om_1}/ \sO_X(-1) \to \cS \to 0.
\end{align*}
In order to prove the lemma there just remains to check that $\cK$ is isomorphic to 
$\cK^*(1)$: this will allow to fit the exact sequence \ref{one-half} and its twisted 
transpose into a single long exact sequence, which is the desired complex since 
$\wedge^2V_{\om_1} \cong V_{\om_2}$, $\wedge^2V_{\om_6} \cong V_{\om_5}$, and
$V_{\om_1}^* \cong V_{\om_6}$, $V_{\om_2}^* \cong V_{\om_5}$.

Recall that $\cQ$ fits into an exact sequence:
\begin{align*}
 0 \to  \Omega_X(1) \to \cQ \to  \sO_X(1) \to 0,
\end{align*}
from which we get the exact sequence:
\begin{align*}
 0 \to  \Omega^2_X(2) \to \wedge^2\cQ \to  \Omega_X(2) \to 0.
\end{align*}
Putting this exact sequence together with (\ref{suiteK}) and the 
exact sequence that follows, we deduce  a complex:
\begin{align*}
 0 \to  \cT_X(-1) \to \cK \to  \Omega_X(2) \to 0
\end{align*}
whose middle cohomology is a vector bundle $\cL$ fitting into the
exact sequence: 
 \begin{align*}
 0 \to  \cS \to \cL \to  \Omega^2_X(2) \to 0.
\end{align*}
We claim that this extension is trivial. Indeed, all the construction
being equivariant with respect to the simply connected group $E_6$,
it must be defined by an invariant class $e\in \Ext^1_X(\Omega^2_X(2),\cS)^{E_6}$.
But a straightforward application of Bott's theorem implies that there
is no non-zero such class. Hence $\cL= \cS \oplus \Omega^2_X(2)$. 
Now, using the same argument we check that there is no non-trivial 
extension of $\cL$ by  $\cT_X(-1)$, and finally, no non-trivial 
extension of $\cL\oplus \cT_X(-1)$ by $\Omega^2_X(2)$. This implies that
$\cK$ is completely reducible. More precisely,
 \begin{align*}
 \cK\simeq \cS \oplus \Omega^2_X(2)\oplus \cT_X(-1)\oplus \Omega^2_X(2)\simeq \cK^*(1).
\end{align*}
Indeed, we already know that $\cS\simeq \cS^*(1)$, and $\Omega^2_X(2)\simeq \wedge^2\cT_X(-1)$
by Lemma \ref{wedges}. 
\end{proof}

\begin{lem}
  \label{wedge2}
  The bundle $\wedge^2 \cS=\wedge^2 \cS^*(2)$ belongs to $\sD_2=\langle \cS^*,\sO_X,\sO_X(1),\sO_X(2),\cS^*(3) \rangle.$
  In particular,
  \[\wedge^2 \cS^*(t) \in \sD \qquad \mbox{for $2\le t \le 10$}.\]
\end{lem}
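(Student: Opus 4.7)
My plan is to mimic the strategy of Lemma \ref{6term}: construct an explicit exact sequence whose leftmost term is $\wedge^2\cS^*$ (or its twist $\wedge^2\cS$) and whose remaining terms are built from the generators of $\sD_2$. The natural starting point is the ``$\wedge^2$-Koszul'' complex attached to the tautological inclusion $\cS^*\hookrightarrow V_{\om_6}\ts\sO_X$ with cokernel $\cQ$:
\[
0\to\wedge^2\cS^*\to V_{\om_6}\ts\cS^*\to S^2V_{\om_6}\ts\sO_X\to S^2\cQ\to 0.
\]
Twisting by $\sO_X(2)$, the leftmost term becomes $\wedge^2\cS=\wedge^2\cS^*(2)$, and the middle two terms are direct sums of $\cS^*(2)$ and $\sO_X(2)$.

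The terms $\sO_X(2)$ already sit in $\sD_2$ by definition. To bring $\cS^*(2)$ into $\sD_2$, I would combine the tangent complex (\ref{tangent}) $0\to\sO_X\to V_{\om_1}\ts\sO_X(1)\to\cS^*(2)\to 0$ (with middle cohomology $\cT_X$) together with the normal bundle sequence $0\to\cT_X\to\cT_{\PP V_{\om_1}}|_X\to\cS^*(2)\to 0$, arguing as in Lemma \ref{6term} that an invariant $\Ext^1$ vanishes by Bott's theorem and produces the needed decomposition. The last term $S^2\cQ(2)$ is treated via the filtration coming from $0\to\Omega_X(1)\to\cQ\to\sO_X(1)\to 0$, whose symmetric square has successive quotients $\sO_X(4),\Omega_X(4),S^2\Omega_X(4)$, each of which I would place in $\sD_2$ (after suitable twist) using (\ref{cotangent}), Lemma \ref{wedges}, and further equivariant splittings.

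The main obstacle, exactly as in Lemma \ref{6term}, is controlling the $E_6$-equivariant extension classes that appear. One must check by Bott's theorem that certain invariant $\Ext^1$ groups between irreducible homogeneous bundles on $X$ vanish, so that the relevant extensions split and the resulting bundles decompose into irreducible summands that can be matched against the generators $\cS^*,\sO_X,\sO_X(1),\sO_X(2),\cS^*(3)$. A cleaner alternative, fully in the style of Lemma \ref{6term}, is to splice the Koszul sequence above with its twisted dual through a self-dual middle term $\cL'\cong\cL'^*(1)$ (the analogue of the bundle $\cK$ in the previous proof), producing a single long exact sequence with $\wedge^2\cS^*$ at one end and a twist of $\wedge^2\cS^*$ at the other, whose intermediate terms are direct sums of $\sO_X(i)$ and $\cS^*(i)$ for $i\in\{0,1,2,3\}$.

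For the final ``in particular'' clause, I would simply tensor the containment $\wedge^2\cS\in\sD_2$ by $\sO_X(t-2)$. This yields $\wedge^2\cS^*(t)\in\sD_2(t-2)$, and the shifted generators $\cS^*(t-2),\sO_X(t-2),\sO_X(t-1),\sO_X(t),\cS^*(t+1)$ all lie in $\sD$ exactly when $0\le t-2$ and $t+1\le 11$, that is, for $2\le t\le 10$.
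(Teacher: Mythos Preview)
Your approach is genuinely different from the paper's, and it has a real gap.

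The paper's proof is a one-liner built on the \emph{adjoint representation} $V_{\om_4}$ of $E_6$: restricting $\mathfrak e_6$ to the Levi gives
\[
\gr(V_{\om_4}\ts\sO_X)\;\cong\;\cT_X\oplus\Omega_X\oplus\wedge^2\cS(-1)\oplus\sO_X .
\]
Since $V_{\om_4}\ts\sO_X$ is trivial (hence in $\langle\sO_X\rangle$), and since $\cT_X\in\langle\sO_X,\sO_X(1),\cS^*(2)\rangle$ and $\Omega_X\in\langle\cS^*(-1),\sO_X(-1),\sO_X\rangle$ by (\ref{tangent}) and (\ref{cotangent}), one gets $\wedge^2\cS(-1)\in\langle\cS^*(-1),\sO_X(-1),\sO_X,\sO_X(1),\cS^*(2)\rangle$; twisting by $\sO_X(1)$ yields $\wedge^2\cS\in\sD_2$.

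Your Koszul route does not close. After twisting the sequence
\[
0\to\wedge^2\cS^*\to V_{\om_6}\ts\cS^*\to S^2V_{\om_6}\ts\sO_X\to S^2\cQ\to 0
\]
by $\sO_X(2)$, you need $\cS^*(2)$ and $S^2\cQ(2)$ in $\sD_2$. Neither works. First, $\cS^*(2)$ is \emph{not} among the generators of $\sD_2$, and your proposed fix via the normal sequence only rewrites the problem as $\cT_X\in\sD_2$; the sequences (\ref{tangent}) and the normal bundle sequence together merely say $\cS^*(2)\in\langle\sO_X,\sO_X(1),\cT_X\rangle$ and $\cT_X\in\langle\sO_X,\sO_X(1),\cS^*(2)\rangle$, which is circular, and the normal sequence does \emph{not} split equivariantly. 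Second, the filtration of $S^2\cQ(2)$ you wrote down has $\sO_X(4)$ as a graded piece, which lies outside $\sD_2$. Your ``cleaner alternative'' of splicing with a twisted dual through a self-dual middle term is not worked out; a first-Chern-class count already shows $S^2\cQ$ is not isomorphic to any twist of its dual, so the analogue of the bundle $\cK$ from Lemma~\ref{6term} would have to be constructed by hand, and you have not done so.

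Your final paragraph deducing the range $2\le t\le 10$ from $\wedge^2\cS\in\sD_2$ is correct and matches the paper.
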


\begin{proof}
 Consider the adjoint representation $V_{\om_4}$. By computing its restriction to 
  $\mathrm{Spin}_{10}$ and adjusting the first Chern class, we obtain:
  \begin{equation}
    \label{adjoint}
    \gr(V_{\om_4} \ts \sO_X) \cong \cT_X \oplus \Omega_X \oplus \wedge^2
    \cS(-1) \oplus \sO_X.
  \end{equation}

  Now, using \eqref{cotangent} and \eqref{tangent} we get 
  that $\cT_X$ belongs to $\langle \sO_X,\sO_X(1),\cS^*(2) \rangle$
  and
  $\Omega_X$ to $\langle \cS^*(-1),\sO_X(-1),\sO_X \rangle$, so that 
  \eqref{adjoint} ensures that 
  $\wedge^2 \cS(-1) \cong \wedge^2 \cS^*(1)$ lies in $\langle
  \cS^*(-1),\sO_X(-1),\sO_X,\sO_X(1),\cS^*(2) \rangle$.
  This clearly implies our statement.
\end{proof}

\begin{lem} \label{wedge3}
The bundle 
    $\wedge^3 \cS \cong \wedge^3 \cS^*(3)$ lies in:
\[\sD_3=\langle
\sO_X,\cS_2^*(1),\cS^*(1),\sO_X(1),\cS^*(2),\sO_X(2),\sO_X(3)
\rangle.
\]
In particular, 
  \[ \wedge^3 \cS^*(t) \in \sD \qquad \mbox{for $3 \le t \le 11$}.\]
\end{lem}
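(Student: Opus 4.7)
The plan is to parallel Lemma \ref{wedge2}, exhibiting $\wedge^3 \cS$ as a graded piece of the trivialization of a suitable $E_6$-homogeneous bundle whose remaining graded pieces already lie in $\sD_3$. The identifications $\wedge^3 \cS \cong \wedge^2 \cT_X \cong \Omega^2_X(3)$ from Lemma \ref{wedges} reduce the question to showing $\Omega^2_X(3) \in \sD_3$.

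First I would take the second wedge power of the defining sequence $0 \to \cS^* \to V_{\om_6} \ts \sO_X \to \cQ \to 0$, tensor by $\sO_X(1)$, and combine it with the sub-filtration $0 \to \Omega_X(1) \to \cQ \to \sO_X(1) \to 0$. This endows the trivial bundle $\wedge^2 V_{\om_6} \ts \sO_X(1) = \sO_X(1)^{\oplus 351}$ with a five-step filtration whose successive quotients, from bottom to top, are $\wedge^2 \cS^*(1)$, $\cS^* \ts \Omega_X(2)$, $\cS^*(2)$, $\Omega^2_X(3) = \wedge^3 \cS$, and $\Omega_X(3)$. The total bundle lies in $\sD_3$, as does $\cS^*(2)$ (a direct generator) and $\Omega_X(3)$, which is the middle cohomology of \eqref{cotangent} twisted by $\sO_X(3)$, whose terms are direct sums of the generators $\cS^*(2)$, $\sO_X(2)$, $\sO_X(3)$. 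The piece $\cS^* \ts \Omega_X(2)$ is the middle cohomology of the sequence obtained by tensoring \eqref{cotangent}$\,\ts \sO_X(2)$ by $\cS^*$; decomposing $\cS^* \ts \cS^*(1) = \cS_2^*(1) \oplus \sO_X \oplus \wedge^2 \cS^*(1)$ via the splitting $S^2 \cS^* \cong \cS_2^* \oplus \sO_X(-1)$ from the proof of Lemma \ref{6term} reduces it to the single outstanding question of whether $\wedge^2 \cS^*(1) \in \sD_3$. Once this is secured, $\wedge^3 \cS \in \sD_3$ follows by extracting it as an iterated cone inside the triangulated subcategory, and the range $3 \le t \le 11$ is obtained by further twisting.

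The principal obstacle is therefore the containment $\wedge^2 \cS^*(1) \in \sD_3$. The adjoint representation $V_{\om_4}$ used in Lemma \ref{wedge2} provides the graded decomposition $\cT_X \oplus \Omega_X \oplus \wedge^2 \cS^*(1) \oplus \sO_X$; three of the four pieces sit in $\sD_3$ (via \eqref{tangent} for $\cT_X$ and trivially for $\sO_X$ and the total rank-$78$ trivial bundle), but $\Omega_X$ does not, because its cotangent-complex resolution \eqref{cotangent} involves the negatively twisted bundles $\cS^*(-1)$ and $\sO_X(-1)$. To dispose of this I would either seek a second $E_6$-representation whose graded decomposition realizes $\wedge^2 \cS^*(1)$ together only with bundles in $\sD_3$, or combine the adjoint decomposition with further twisted versions of \eqref{cotangent} and invoke Bott's theorem to verify that the relevant $E_6$-equivariant $\Ext^1$'s vanish, so that the awkward extensions split. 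This is the technical heart of the proof, paralleling the delicate treatment of the bundle $\cK$ in the proof of Lemma \ref{6term}.
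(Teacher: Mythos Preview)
Your reduction to $\Omega^2_X(3)\in\sD_3$ is correct, but the route you take runs into an obstacle you do not resolve, and the paper's proof avoids it entirely by choosing a different short exact sequence to filter.

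You take the wedge square of the tautological sequence $0\to\cS^*\to V_{\om_6}\ts\sO_X\to\cQ\to 0$, and are then forced to control $\wedge^2\cS^*(1)$. As you yourself observe, Lemma~\ref{wedge2} only places $\wedge^2\cS^*(1)$ in $\langle\cS^*(-1),\sO_X(-1),\sO_X,\sO_X(1),\cS^*(2)\rangle$, and the two negative twists are not in $\sD_3$. Your proposed fixes (another $E_6$-module, or splitting via Bott) are not carried out, and there is no evident reason either should succeed; moreover $\wedge^2\cS^*(1)$ enters your argument \emph{twice} (once directly, once inside $\cS^*\ts\Omega_X(2)$), so even a posteriori one cannot extract it from the filtration.

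The paper instead takes the exterior square of the \emph{conormal} sequence~\eqref{normal},
\[
0\to\cS^*(-1)\to\Omega_{\PP V_{\om_1}|X}\to\Omega_X\to 0,
\]
yielding (after the twist by $\sO_X(3)$) a resolution of $\Omega^2_X(3)$ whose other terms are $S^2\cS^*(1)$, $\cS^*\ts\Omega_{\PP V_{\om_1}|X}(2)$, and $\Omega^2_{\PP V_{\om_1}|X}(3)$. The point is that $\Omega_{\PP V_{\om_1}|X}$ is governed by the restricted Euler sequence~\eqref{euler}, so its tensor by $\cS^*$ and its wedge square are generated by line bundles and single copies of $\cS^*$; together with $S^2\cS^*(1)\cong\cS_2^*(1)\oplus\sO_X$ one lands directly in $\sD_3$ with no appearance of $\wedge^2\cS^*$ at all. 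This is the missing idea: filter $\Omega_X$ via $\Omega_{\PP V_{\om_1}|X}$ rather than via $\cQ$. Finally, the passage to the full range $3\le t\le 11$ is not just ``further twisting'': for $t\ge 6$ the twist $\sD_3(t-3)$ involves $\cS_2^*(t-2)$ with $t-2>2$, and one needs Lemma~\ref{6term} to know those lie in $\sD$.
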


\begin{proof}
   We have to check that $\wedge^3 \cS \cong \Omega^2_X(3)$ (by Lemma \ref{wedges})
lies in $\sD_3$.
   To do this, we take the exterior square of the conormal sequence \eqref{normal}, and
   we twist by $\sO_X(3)$, so that we are lead to ask if
   $S^2 \cS^*(1)$,  $\cS^* \ts \Omega_{\PP V_{\om_1} | X}(2)$ and
   $\Omega^2_{\PP V_{\om_1} | X}(3)$ 
   belong to $\sD_3$.

   For the first bundle, recall from \cite{manivel:cayley} the isomorphism $S^2 \cS^*(1) \cong
   \cS_2^*(1) \oplus \sO_X$.
   For the second one, twisting by $\cS^*$ the Euler sequence
   \eqref{euler} we see that $\cS^* \ts \Omega_{\PP V_{\om_1} |
     X}(2)$ belongs to $\langle \cS^*(1),\cS^*(2) \rangle$.
   For the third bundle, note that $\Omega^2_{\PP V_{\om_1} | X}(3)$ lies in $\langle
   \sO_X(1),\sO_X(2),\sO_X(3) \rangle$.
   This shows that $\wedge^3 \cS$ lies in $\sD_3$.
   
   Using the isomorphism $\wedge^3 \cS^* \cong \wedge^3
   \cS(-3)$, and Lemma \ref{6term}, we deduce that $\wedge^3 \cS^*(t)$ lies in $\sD$, for $3 \le t \le 11$.
\end{proof}

\begin{lem} \label{wedge4}
  The bundle  $\wedge^4 \cS \cong \wedge^4 \cS^*(4)$ lies in: 
\[\sD_4=\langle \cS^*(1),\sO_X(1),\cS^*(2),\sO_X(2),\cS_2^*(3),\cS^*(3),\sO_X(3),\cS^*(4) \rangle.\]
  In particular, 
 \[ \wedge^4 \cS^*(t) \in \sD, \qquad \mbox{for $3 \le t \le 11$}.\]
\end{lem}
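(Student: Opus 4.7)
The plan is to realise $\wedge^4\cS$ as a direct summand of $\cT_X \otimes \Omega_X(2)$, then verify that this tensor product lies in $\sD_4$. By Lemma \ref{wedges}, $\cT_X = \cE_{\om_4}$ and $\Omega_X(2) = \cE_{\om_2}$ correspond to the two half-spin representations of the semisimple part of $L$, namely $\mathrm{Spin}_{10}$. Since the half-spins of $D_5$ are mutually dual, the standard Clifford-algebra decomposition gives
\[
\cT_X \otimes \Omega_X(2) \;\cong\; \sO_X \oplus \wedge^2\cS \oplus \wedge^4\cS,
\]
the three summands corresponding to $\wedge^0 V$, $\wedge^2 V$, $\wedge^4 V$ of the vector representation $V \leftrightarrow \cS$ (the rank check $16 \cdot 16 = 1+45+210$ confirms this).

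To verify $\cT_X \otimes \Omega_X(2) \in \sD_4$, I tensor the 3-term complex \eqref{tangent} for $\cT_X$ with the $\sO_X(2)$-twist of the 3-term complex \eqref{cotangent} for $\Omega_X(2)$. Most of the nine resulting tensor-product terms are direct twists of $\sO_X$ or $\cS^*$ by $\sO_X(a)$ for $1 \le a \le 4$, hence immediately in $\sD_4$ by inspection. The only delicate term is $\cS^*(2) \otimes \cS^*(1) \cong S^2\cS^*(3) \oplus \wedge^2\cS^*(3)$: invoking $S^2\cS^* \cong \cS_2^* \oplus \sO_X(-1)$ from \cite{manivel:cayley} yields $S^2\cS^*(3) \cong \cS_2^*(3) \oplus \sO_X(2) \subseteq \sD_4$, and twisting the conclusion of Lemma \ref{wedge2} by $\sO_X(1)$ places $\wedge^2\cS^*(3)$ inside $\langle \cS^*(1), \sO_X(1), \sO_X(2), \sO_X(3), \cS^*(4) \rangle \subseteq \sD_4$.

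Hence $\cT_X \otimes \Omega_X(2) \in \sD_4$, and by thickness its direct summand $\wedge^4\cS$ also belongs to $\sD_4$. The ``in particular'' range of twists follows by applying $\sO_X(k) \otimes (-)$ for $k \in \{-1, 0, \ldots, 7\}$ and noting that $\cS_2^*(3+k) \in \sD$ throughout this range by Lemma \ref{6term}. The main difficulty is the representation-theoretic identity underlying the Clifford decomposition in the first step; once that is in hand, the tensor bookkeeping is routine, with the $\cS^*\otimes\cS^*$ factor being the only non-trivial piece.
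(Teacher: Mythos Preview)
Your argument is essentially the paper's: both establish $\cT_X\otimes\Omega_X(2)\in\sD_4$ by tensoring the complex \eqref{tangent} with the $\sO_X(2)$-twist of \eqref{cotangent}, and both dispatch the only delicate term $\cS^*\otimes\cS^*(3)$ via $S^2\cS^*\cong\cS_2^*\oplus\sO_X(-1)$ together with Lemma~\ref{wedge2}.

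One slip worth flagging: your Clifford decomposition has the wrong twists on the complementary summands. Since $\cE_{\om_2}$ and $\cE_{\om_4}$ are irreducible $P_1$-modules, $P^u$ acts trivially on each and hence on their tensor product, so the splitting is genuine; but a slope count (each summand must have slope $\tfrac{3}{4}+\tfrac{5}{4}=2$) forces
\[
\cT_X\otimes\Omega_X(2)\;\cong\;\sO_X(2)\oplus\wedge^2\cS(1)\oplus\wedge^4\cS,
\]
not $\sO_X\oplus\wedge^2\cS\oplus\wedge^4\cS$. The paper records exactly this (writing $\gr$ only for notational uniformity with the other lemmas) and then checks directly that $\sO_X(2)$ and $\wedge^2\cS(1)$ lie in $\sD_4$, so that $\wedge^4\cS$ falls out by a cofibre. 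Your alternative---invoking thickness of $\sD_4$ to extract $\wedge^4\cS$ without identifying its complement---is legitimate, since the generators of $\sD_4$ form an exceptional sequence and hence $\sD_4$ is admissible; this is what renders your twist error harmless. But note that $\sO_X\notin\sD_4$ (all generators have twist $\ge 1$), so the decomposition as written is wrong even if the conclusion survives.
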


\begin{proof}
  By Lemma \ref{wedges}, we have $\wedge^4 \cS \cong \cE_{\om_2+\om_4}$. 
    Using the same method as before we observe that the completely irreducible 
bundle associated to $\cE_{\om_2} \ts \cE_{\om_4}\cong \Omega_X(2) \ts \cT_X$ is:
\[\gr(\cE_{\om_2} \ts \cE_{\om_4}) =
  \cE_{\om_2+\om_4} \oplus \wedge^2 \cS(1)\oplus \sO_X(2).\]
  Therefore, it we will be enough to show that the three bundles $\Omega_X(2) \ts \cT_X$,
$\wedge^2 \cS(1)$ and $\sO_X(2)$  lie in the subcategory $\sD_4$.
  This is obviously the case for $\sO_X(2)$, and also for $\wedge^2
  \cS(1)$ in view of Lemma \ref{wedge2}.
  To  show that $\Omega_X(2) \ts \cT_X$
  lies in $\sD_4$, we first tensor the complex \eqref{cotangent}, twisted by $\sO_X(2)$, 
with its dual \eqref{tangent}.
  The proof will be finished once we show that all the terms appearing in the resulting complex 
  lie in $\sD_4$.
  This is clear for  the twists of $\sO_X$ and of $\cS^*$.
  But the only term which is not of this form is: 
\[\cS^* \ts \cS^*(3)\cong \wedge^2 \cS^*(3) \oplus \cS_2^*(3) \oplus
  \sO_X(2),\]
and we have already seen that $\wedge^2 \cS^*(3) \cong \wedge^2 \cS(1)$ lies in $\sD_4$, 
as well as $\cS_2^*(3)$ and $\sO_X(2)$.
\end{proof}

Set $\cS_3 = \cE_{3 \om_1}$ and recall from \cite{manivel:cayley} that
$S^3 \cS \cong \cS_3 \oplus \cS(1)$ and $\cS_3^* \cong \cS_3(-3)$.

\begin{lem} \label{S3}
  The bundle $\cS_3^*(t)$ lies in $\sD$ for $1 \le t \le 6$.
\end{lem}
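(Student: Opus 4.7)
The plan is to use the direct summand relation $S^3 \cS \cong \cS_3 \oplus \cS(1)$ from \cite{manivel:cayley}, which after twisting gives $S^3 \cS^*(t) \cong \cS_3^*(t) \oplus \cS^*(t-1)$. Since $\sD$ is admissible (being generated by an exceptional collection) and hence closed under direct summands, and since $\cS^*(t-1) \in \sD$ for $1 \le t \le 6$, it suffices to prove $S^3 \cS^*(t) \in \sD$ for these values of $t$.

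To access $S^3 \cS^*$, I would use the Koszul-type 5-term exact sequence associated to the subbundle inclusion $\cS^* \hookrightarrow V_{\om_6} \ts \sO_X$ with cokernel $\cQ$:
\[
0 \to S^3 \cS^* \to V_{\om_6} \ts S^2 \cS^* \to \wedge^2 V_{\om_6} \ts \cS^* \to \wedge^3 V_{\om_6} \ts \sO_X \to \wedge^3 \cQ \to 0.
\]
After twisting by $\sO_X(t)$ and using $S^2\cS^* \cong \cS_2^* \oplus \sO_X(-1)$, the three middle terms decompose into direct sums of twists of $\cS_2^*$, $\cS^*$ and $\sO_X$, all in $\sD$ by Lemma~\ref{6term} and the definition of $\sD$ for $1 \le t \le 6$. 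The problem thus reduces to showing $\wedge^3\cQ(t) \in \sD$.

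For $\wedge^3\cQ(t)$, applying $\wedge^3$ to the Euler-derived short exact sequence $0 \to \Omega_X(1) \to \cQ \to \sO_X(1) \to 0$ yields
\[
0 \to \Omega_X^3(t+3) \to \wedge^3 \cQ(t) \to \Omega_X^2(t+3) \to 0.
\]
The quotient $\Omega_X^2(t+3) \cong \wedge^3 \cS^*(t+3)$ lies in $\sD$ by Lemma~\ref{wedge3}, since $4 \le t+3 \le 9$. The remaining piece $\Omega_X^3(t+3) \cong \cE_{\om_4+\om_5}(t-1)$ is the main obstacle.

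To handle $\cE_{\om_4+\om_5}(t-1)$, I would imitate the strategy of Lemma~\ref{wedge4}: decompose the graded bundle $\gr(\cE_{\om_4}\ts \cE_{\om_5}) = \gr(\cT_X\ts \wedge^2\cS)$ into its homogeneous irreducible summands, among which $\cE_{\om_4+\om_5}$ appears alongside simpler components handled by Lemmas~\ref{wedges}, \ref{wedge2}, \ref{wedge3}. The tensor product $\cT_X\ts \wedge^2\cS$ itself lies in $\sD$ (after suitable twists) by tensoring the tangent complex~\eqref{tangent} with the resolution of $\wedge^2\cS$ from Lemma~\ref{wedge2}; the resulting pairwise tensor products reduce, via $\cS^*\ts \cS^* \cong \cS_2^* \oplus \sO_X(-1) \oplus \wedge^2\cS^*$, to bundles already known to be in $\sD$. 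Finally, Bott's theorem---as invoked in the proof of Lemma~\ref{6term}---should rule out the invariant Ext classes obstructing the splitting of the graded decomposition, yielding $\cE_{\om_4+\om_5}(t-1) \in \sD$. The restricted range $1 \le t \le 6$ in the statement is precisely what survives the twist-range constraints of the preceding lemmas used along the way.
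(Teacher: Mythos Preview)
Your outline is essentially correct and follows the same path as the paper: reduce to $S^3\cS^*$, then to $\Omega^3_X\cong\cE_{\om_4+\om_5}(-4)$, then handle this via the tensor product $\cT_X\ts\wedge^2\cS=\cE_{\om_4}\ts\cE_{\om_5}$. The only difference in setup is that the paper takes the cube of the conormal sequence \eqref{normal} (twisted by $\sO_X(1)$), obtaining
\[
0\to S^3\cS^*\to S^2\cS^*\ts\Omega_{\PP V_{\om_1}|X}(1)\to\cS^*\ts\Omega^2_{\PP V_{\om_1}|X}(2)\to\Omega^3_{\PP V_{\om_1}|X}(3)\to\Omega^3_X(3)\to 0,
\]
whereas you use the tautological sequence for $\cS^*\hookrightarrow V_{\om_6}\ts\sO_X$ and then peel $\wedge^3\cQ$ apart via $0\to\Omega_X(1)\to\cQ\to\sO_X(1)\to 0$. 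Both routes land on $\Omega^3_X(3)$ as the essential term, with the same twist bounds.

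One point to clarify. Your appeal to Bott's theorem ``to rule out the invariant Ext classes obstructing the splitting'' is unnecessary and reflects a small misreading of the $\gr$ device. A tensor product $\cE_\mu\ts\cE_\nu$ of two \emph{irreducible} $P_1$-bundles already has the unipotent radical $P^u$ acting trivially, so it equals its own $\gr$ and decomposes as a genuine direct sum of irreducibles; no Ext computation is needed. The paper, in fact, does not invoke any splitting here: it simply records
\[
\gr(\cE_{\om_4}\ts\cE_{\om_5}(-1))\cong\cE_{\om_4+\om_5}(-1)\oplus\cE_{\om_2+\om_6}(-1)\oplus\cE_{\om_4},
\qquad
\gr(\cE_{\om_2}\ts\cE_{\om_6}(-1))\cong\cE_{\om_2+\om_6}(-1)\oplus\cE_{\om_4},
\]
and uses the second identity to dispose of the intermediate piece $\cE_{\om_2+\om_6}$, a step you gloss over as ``simpler components''. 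With the splitting (which is automatic here) and the fact that $\sD$ is thick, your shortcut of taking $\cE_{\om_4+\om_5}$ as a direct summand works and in fact bypasses that extra step.
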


\begin{proof}
  Taking the symmetric cube of \eqref{normal} twisted by $\sO_X$, we
  get:
  \[
  0 \to S^3 \cS^* \to S^2 \cS^* \ts \Omega_{\PP V_{\om_1} | X}(1) \to
  \cS^* \ts \Omega^2_{\PP V_{\om_1} | X}(2) \to
  \Omega^3_{\PP V_{\om_1} | X}(3) \to \Omega^3_X(3) \to 0.
  \]
  We look at the various terms of this sequence.
  The first one decomposes as $\cS^*(-1) \oplus \cS_3^*$, and our aim
  is to understand which of its twists lie in $\sD$, so let us study
  the other terms.
  The second term lies in $\langle \sO_X(-1),\cS_2^*,\sO_X, \cS_2^*(1) \rangle$,
  the third one in $\langle \cS^*,\cS^*(1),\cS^*(2) \rangle$, and the 
  fourth one in $\langle  \sO_X,\sO_X(1),\sO_X(2),\sO_X(3) \rangle$. So
  it remains to study the last term $\Omega^3_X(3)$.
  
 By Lemma \ref{wedges} we know that $\Omega^3_X(3) \cong \cE_{\om_4+\om_5}(-1)$.
Moreover we check that:
  \begin{align*}
    & \gr(\cE_{\om_4} \ts \cE_{\om_5}(-1)) \cong \cE_{\om_4+\om_5}(-1) \oplus \cE_{\om_2+\om_6}(-1) \oplus \cE_{\om_4}, \\
    & \gr(\cE_{\om_2} \ts \cE_{\om_6}(-1)) \cong \cE_{\om_2+\om_6}(-1) \oplus \cE_{\om_4}.
  \end{align*}
  We deduce that $\Omega^3_X(3)$ lies in the category generated by 
$\cE_{\om_4} \ts \cE_{\om_5}(-1)$, $\cE_{\om_2} \ts \cE_{\om_6}(-1)$ and $\cE_{\om_4}$. 
  Some of these terms are readily
  settled, for instance
  $\cE_{\om_4} \cong \cT_X \in \langle \sO_X,\sO_X(1),\cS^*(2)
  \rangle$ by \eqref{tangent}.
  Next,  $\cE_{\om_6}(-1) \cong \cS^*$ and
  $\cE_{\om_2} \cong \Omega_X(2) \in \langle \cS^*(1), \sO_X(1),\sO_X(2) \rangle$ by
  \eqref{cotangent}, so $\cE_{\om_2} \ts \cE_{\om_6}(-1)$ lies in 
  $\langle \wedge^2 \cS^*(1),\sO_X,\cS_2^*(1) ,\cS^*(1),\cS^*(2) \rangle$,
  so Lemma \ref{wedge2} will take care of this term.
  
  It remains to analyze $\cE_{\om_4} \ts \cE_{\om_5}(-1)$.
  First recall by Lemma \ref{wedges} that this bundle is isomorphic to
  $\cT_X \ts \wedge^2 \cS^*(1)$.
  Moreover, $\cT_X \in \langle \sO_X,\sO_X(1),\cS^*(2) \rangle$,
  and by Lemma \ref{wedge2}, $\wedge^2\cS^*(1)$ belongs to
  $\langle \cS^*(-1), \sO_X(-1), \sO_X, \sO_X(1),\cS^*(2) \rangle$.
  Recalling that $\cS^* \ts \cS^* \cong \cS_2^* \oplus
  \wedge^2 \cS^* \oplus \sO_X(-1)$ we deduce that:
  \[
  \cT_X \ts \wedge^2\cS^*(1) \in \left\langle \wedge^2 \cS^*(1),\cS_2^*(1),
  \{\cS^*(r), \sO_X(s),\mbox{\, for $-1\le r,s \le 3$}\},\cS_2^*(4),
  \wedge^2 \cS^*(4)
  \right\rangle.
  \]
  Putting all these elements together, we get:
  \[
  \cS_3^* \in \left\langle
  \{\cS_2^*(p),\cS^*(r), \sO_X(s),\mbox{\, for $ 0\le p \le 4,-1\le r\le 3,-1\le s \le 3$}\},
  \wedge^2 \cS^*(1),  \wedge^2 \cS^*(4)
  \right\rangle.
  \]
  Using Lemma \ref{wedge2} and Lemma \ref{6term}, we deduce that
  $\cS_3^*(t)$ lies in $\sD$ for $1 \le t \le 6$.
\end{proof}

One can deduce from these lemmas that many more bundles lie in $\sD$.
One example of this is the next result, that we will need further on.

\begin{corol} \label{wedge-tensor}
  We have:
  \begin{align}
    \label{p=2}    & \wedge^2 \cS^* \ts \cS^*(t) \in \sD, && \mbox{for
      $2 \le t \le 9$}, \\    
    \label{p=3}    & \wedge^3 \cS^* \ts \cS^*(t) \in \sD, && \mbox{for
      $5 \le t \le 8$},\\
    \label{p=4}    & \wedge^4 \cS^* \ts \cS^*(t) \in \sD, && \mbox{for
      $5 \le t \le 7$}.
  \end{align}
 \end{corol}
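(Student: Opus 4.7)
The plan is to follow exactly the template used in Lemmas~\ref{wedge3} and \ref{wedge4}: decompose each tensor product $\wedge^p \cS^* \ts \cS^*$ (for $p = 2,3,4$) into a direct sum of irreducible homogeneous bundles, and then reduce membership in $\sD$ to containments already established by Lemmas~\ref{6term}, \ref{wedge2}, \ref{wedge3}, and \ref{wedge4}. A crucial simplification is that $\cS^*$ and each $\wedge^p \cS^*$ correspond to irreducible $L$-modules on which the unipotent radical $P^u$ acts trivially; hence their tensor product has trivial $P^u$-action and splits as an honest direct sum of irreducible homogeneous bundles, so no extension problem arises here (in contrast to the proof of Lemma~\ref{6term}).

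Concretely, for each $p$ I would perform the decomposition of $\cS^* \ts \wedge^p \cS^*$ as an $L$-module using standard $\mathrm{Spin}_{10}$-representation theory: the skew-symmetrization map always produces $\wedge^{p+1}\cS^*$ as a direct summand, and the remaining summands can be identified by combining Clifford multiplication with the already-recalled isomorphism $\cS^* \ts \cS^* \cong \cS_2^* \oplus \wedge^2 \cS^* \oplus \sO_X(-1)$ and the wedge-power identifications of Lemma~\ref{wedges}. Up to twist by some $\sO_X(k)$, each irreducible summand is then one of the bundles $\wedge^q \cS^*$, $\cS_2^*$, or $\sO_X$ for small $q$, whose first Chern class must be computed to adjust the twist correctly.

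Having obtained the decomposition, I would twist the entire identity by $\sO_X(t)$ and invoke the appropriate earlier lemma for each summand: Lemma~\ref{wedge2} controls twists of $\wedge^2 \cS^*$, Lemma~\ref{wedge3} handles $\wedge^3 \cS^*$, Lemma~\ref{wedge4} handles $\wedge^4 \cS^*$, and Lemma~\ref{6term} handles twists of $\cS_2^*$; the twists of $\cS^*$ and $\sO_X$ are in the collection by construction. Each lemma specifies a range of $t$ for which its bundle lies in $\sD$, and the range claimed in each of \eqref{p=2}, \eqref{p=3}, \eqref{p=4} is precisely the intersection of these individual ranges over all summands in the decomposition. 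The steady shrinking of the range as $p$ grows simply reflects the appearance of more constraining summands.

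The main obstacle is performing the three explicit representation-theoretic decompositions correctly and matching every summand to a bundle handled by one of the previous lemmas with the right twist; this is a finite but somewhat delicate bookkeeping exercise, comparable in spirit to the calculations already carried out in Lemmas~\ref{wedge4} and \ref{S3}. No new conceptual tool beyond the preceding lemmas is required, so the structure of the argument is entirely parallel to the machinery developed earlier in this section.
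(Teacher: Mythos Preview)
Your plan has a genuine gap: the claim that every irreducible summand of $\wedge^p\cS^*\ts\cS^*$ is, up to twist, one of $\wedge^q\cS^*$, $\cS_2^*$, or $\sO_X$ is false. Since $\cS$ corresponds to the $10$-dimensional vector representation $V$ of $\mathrm{Spin}_{10}$, for $p=2$ one has
\[
\wedge^2 V\ts V \;\cong\; \wedge^3 V \;\oplus\; V_{\om_5+\om_6}\;\oplus\; V,
\]
so $\wedge^2\cS^*\ts\cS^*$ contains a twist of the irreducible bundle $\cE_{\om_5+\om_6}$. This bundle is not a twist of any $\wedge^q\cS^*$, nor of $\cS_2^*$, $\cS_3^*$, or $\sO_X$, and no previous lemma says anything about it. The same phenomenon recurs for $p=3,4$ (e.g.\ $\cE_{\om_3+\om_6}$ appears). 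Your bookkeeping exercise therefore cannot close.

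The paper's proof avoids this obstacle by never decomposing $\wedge^p\cS^*\ts\cS^*$ into irreducibles. Instead it tensors by $\cS^*$ the generating subcategory $\sD_p$ produced in Lemma~\ref{wedge2}, \ref{wedge3}, or \ref{wedge4}; the resulting generators involve only $\cS^*\ts\cS^*$ and $\cS_2^*\ts\cS^*$, whose decompositions stay within the known bundles. For $p=2$ this already gives the range $4\le t\le 9$, but the low twists $t=2,3$ require an extra idea: the identity
\[
\wedge^2\cS^*\ts\cS^*\;\oplus\;\cS_3^* \;\cong\; \cS_2^*\ts\cS^*\;\oplus\;\wedge^3\cS^*,
\]
in which the problematic summand $\cE_{\om_5+\om_6}$ appears on both sides and cancels. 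This brings in Lemma~\ref{S3} on $\cS_3^*$, and then the $6$-term complex of Lemma~\ref{6term}, tensored by $\cS^*$, is used to push $\cS_2^*\ts\cS^*(t)$ down to $t=2,3$. The cases $p=3,4$ are handled by the same mechanism.
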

 
\begin{proof}
  In order to prove \eqref{p=2}, first recall that by Lemma \ref{wedge2}, 
we know that $\wedge^2\cS^*(2)\in \langle \cS^*, \sO_X, \sO_X(1), \sO_X(2),\cS^*(3) \rangle$. 
We deduce that:
\[\wedge^2\cS^*\ts \cS^*(4)\in \langle
\wedge^2\cS^*(2), \sO_X(1), \cS_2^*(2), \cS^*(2), \cS^*(3), \cS^*(4),  \sO_X(4), \cS_2^*(5), \wedge^2\cS^*(5)
\rangle.\]
Therefore $\wedge^2\cS^*\ts \cS^*(t)\in \sD$ for $4\le t\le 9$. 

Now we use the relation:
\begin{equation}
  \label{identity}
  \wedge^2\cS^*\ts\cS^*\oplus\cS_3^* \cong \cS_2^*\ts\cS^* \oplus\wedge^3\cS^*,
\end{equation}
which is easy to establish using
$\mathrm{Spin}_{10}$-representations. With Lemmas \ref{S3} and
\ref{wedge3} 
this implies that $\cS_2^*\ts\cS^*(t)\in \sD$ for $4\le t\le 6$. Then we can use 
the complex from Lemma \ref{6term}, twisted by $\cS^*$, and  deduce
that this is still true for $t=2,3$.  
Then $\wedge^2\cS^*\ts\cS^*(t)$ also belongs to $\sD$ for $t=2,3$.

  To prove \eqref{p=3}, we use Lemma \ref{wedge3}, and we tensor
  by $\cS^*(-3)$ the generators of the subcategory $\sD_3$ appearing in
  this Lemma.
  We find that $\wedge^3 \cS^* \ts \cS^*$ lies in the subcategory
  generated by $\cS^*(r)$, with $-3\le r \le 0$ and by $\cS^* \ts \cS^*(-2)$,
  $\cS^* \ts \cS^*(-1)$, $\cS_2^* \ts \cS^*(-2)$.
  All these terms have been previously encountered. 
  Using the relation \eqref{identity}, Lemma \ref{S3}, Lemma \ref{wedge2} and Lemma \ref{6term} we
  get \eqref{p=3}. 

  It remains to prove \eqref{p=4}.
  We use Lemma \ref{wedge4} to see that:
  \[
  \wedge^4 \cS^* \ts \cS^* \in 
  \left\langle
    \wedge^2 \cS^*(q),\cS_2^*(p),\cS^*(r),\sO_X(s)
  \right\rangle,
  \]
  with $-3 \le q,p \le 0$, $-3 \le r \le -1$, and $-4 \le s \le -1$.
  Now the relation \eqref{p=4} can be deduced from Lemmas \ref{6term}, \ref{wedge2} and \ref{S3}.
\end{proof}

\pagebreak

\section{Quadrics of dimension seven and eight in the Cayley plane}
\label{2}

In order to prove our main result, a key idea is to restrict our exceptional collection to the
quadrics of dimensional 7 and 8 contained in $X$, for these
quadrics are related to dependency loci of sections of $\cS$.
Let us first outline this relationship.


Recall that $\cS$ is a globally generated bundle of rank $10$.
One can compute from \cite{iliev-manivel:78} that $c_{10}(\cS)=0$
and $c_1(\cS)=5H$, where $H$ is the hyperplane class of $X$.
So, a general global section $s$ of $\cS$ vanishes nowhere, and we have
a rank-$9$ vector bundle $\cS_s$ on $X$, having $c_1(\cS_s)=5H$, defined
by the sequence: 
\begin{equation}
  \label{F}
0 \to \sO_X \xr{s} \cS \to \cS_s \to 0.  
\end{equation}

\begin{lem} \label{3quadrics}
  Consider a vector bundle $\cS_s$ as above and a global section
  $\sigma$ of $\cS_s$. Then:
  \begin{enumerate}[i)]
  \item \label{i} if the global section $\sigma$ is general enough, then it vanishes on the union
    $\sQ_\sigma$ of three smooth 7-dimensional quadrics,
  \item \label{ii} any smooth 7-dimensional quadric on the Cayley plane is a 
    component of some $\sQ_\sigma$.
  \end{enumerate}
\end{lem}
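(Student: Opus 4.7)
My plan is to reduce the zero locus of a section of $\cS_s$ to a degeneracy locus of two sections of $\cS$, and then analyse this via the Cartan cubic and the octonionic geometry reviewed in the introduction. Since $X$ is rational homogeneous, $H^1(\sO_X)=0$, so taking cohomology of \eqref{F} gives $H^0(\cS_s)=V_{\om_1}/\langle s\rangle$, and every $\sigma\in H^0(\cS_s)$ lifts to $\tilde\sigma\in H^0(\cS)=V_{\om_1}$, uniquely modulo scalar multiples of $s$. Thus $\sigma$ determines a projective line $\Lambda_\sigma=\langle[s],[\tilde\sigma]\rangle\subset\PP V_{\om_1}$ passing through $[s]$, and $x\in\sQ_\sigma$ iff $s(x)$ and $\tilde\sigma(x)$ are collinear in $\cS_x$, equivalently iff there exists $[v]\in\Lambda_\sigma$ with $v(x)=0$. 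Writing $Z_v=\{x\in X:v(x)=0\}$ for $v\in V_{\om_1}$, this yields $\sQ_\sigma=\bigcup_{[v]\in\Lambda_\sigma}Z_v$.

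The next step is to identify which $[v]$ have $Z_v\neq\emptyset$ and to describe these loci. Since $\cS_x^*\subset V_{\om_6}$ is the linear span of the $\OO$-line $\check Q_x$, the condition $v(x)=0$ means exactly that the hyperplane $\{v=0\}\subset\PP V_{\om_6}$ contains $\check Q_x$. Combining $c_{10}(\cS)=0$ with the $E_6$-invariant factorisation $c(s,s,v)=\ell(v)\,q(s)$ of the polarised Cartan cubic used in the proof of Lemma \ref{6term}, one checks that $\{[v]:Z_v\neq\emptyset\}$ coincides with the Cartan cubic $C\subset\PP V_{\om_1}$, and that for generic $[v]\in C\setminus X$ the locus $Z_v$ is a smooth $7$-dimensional quadric, naturally embedded as a hyperplane section of the $8$-dimensional entry-locus ($\OO$-line) of $[v]$. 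Indeed, the incidence $\bar I=\{(v,x):v(x)=0\}\subset\PP V_{\om_1}\times X$ has dimension $16+16=32$ and dominates $C$ of dimension $25$, so its generic fibres are $7$-dimensional.

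Assertion (i) then follows: since $s$ is nowhere vanishing, $[s]\notin C$, so for generic $\sigma$ the line $\Lambda_\sigma$ meets $C$ transversally in $\deg C=3$ distinct points of $C\setminus X$, whence $\sQ_\sigma=Z_{v_1}\cup Z_{v_2}\cup Z_{v_3}$ is the union of three smooth $7$-quadrics. For (ii), the $E_6$-equivariant map $[v]\mapsto Z_v$ from the $25$-dimensional family $C\setminus X$ to the (also $25$-dimensional) family of smooth $7$-quadrics in $X$ (pairs of an $\OO$-line and a hyperplane section of its span) is generically injective, hence dominant; so any smooth $7$-quadric $Q'\subset X$ equals $Z_v$ for some $[v]\in C\setminus X$, and any line through $[s]$ and $[v]$ produces a section $\sigma$ with $Q'$ a component of $\sQ_\sigma$.

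The main obstacle will be the geometric identification carried out in the middle paragraph: verifying that $Z_v$ is exactly a smooth $7$-dimensional quadric for generic $[v]\in C\setminus X$, and that every such quadric in $X$ arises this way. This pins down the precise link between the $L$-module $\cS_x^*$, the hyperplane $\{v=0\}\subset\PP V_{\om_6}$ and the polarity/entry-locus structure of the Cartan cubic; it is the genuine geometric content of the lemma, and cannot be extracted from the purely bundle-theoretic arguments of Section \ref{1}. The remaining steps — the lifting via $H^1(\sO_X)=0$, the Bezout count $|\Lambda_\sigma\cap C|=3$, and the parametrisation of $\OO$-lines — are formal consequences of what is already recalled in the introduction.
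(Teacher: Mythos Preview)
Your overall strategy coincides with the paper's: lift $\sigma$ to a line $\Lambda_\sigma\subset\PP V_{\om_1}$ through $[s]$, intersect it with the Cartan cubic, and identify $Z_v$ for the three intersection points. The paper phrases the vanishing condition as $v\in\hat T_x\OO\PP^2$ rather than in the dual form you use, but these are equivalent. For (i) the paper is more explicit than your ``one checks'': from $y\in\hat T_x X$ it deduces first that $x$ lies in the entry-locus $Q_y$ (tangent lines being limits of secants), and second that $y\in T_xQ_y$, so $x$ lies on the polar hyperplane section of $Q_y$ with respect to $y$. Your dimension count only gives that generic fibres are $7$-dimensional; it does not by itself show they are smooth quadrics, and the factorisation $c(s,s,v)=\ell(v)q(s)$ from Lemma~\ref{6term} lives on the dual side and does not directly yield this polar description.

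For (ii) there is a genuine gap. Your argument runs: the $E_6$-equivariant map $[v]\mapsto Z_v$ from the $25$-dimensional variety $C\setminus X$ to the family of smooth $7$-quadrics is generically injective, hence dominant, hence surjective. But dominance does not give surjectivity, and even an appeal to $E_6$-equivariance would require knowing that $E_6$ acts transitively on the target --- which in turn presupposes that every smooth $7$-quadric in $X$ is a hyperplane section of some $\OO$-line. That is exactly what the paper proves, and by a direct construction rather than a parameter count: given a smooth $7$-quadric $Q$, the linear span $\langle Q\rangle$ cannot lie in $X$ (maximal linear subspaces of $\OO\PP^2$ have dimension $5$), and since $X$ is cut out by quadrics one has $X\cap\langle Q\rangle=Q$; picking $z\in\langle Q\rangle\setminus Q$, its entry-locus is an $8$-quadric $\tilde Q\supset Q$, so $Q=\tilde Q\cap H$, and the polar point $y$ of $H$ with respect to $\tilde Q$ lies on $C\setminus X$ and satisfies $Z_y=Q$. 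This constructive step is missing from your argument and cannot be replaced by the dimension count.
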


\begin{proof}
A section $s$ of $\cS$ is defined by a vector $v\in V_{\omega_1}$, 
and $s$ vanishes at $x\in\OO\PP^2$ if and only if $v$ belongs to
the affine tangent space $\hat{T}_x\OO\PP^2$. This can
happen only if $v$ belongs to the union of the tangent spaces to
the Cayley plane, which is nothing else than the cone over the
Cartan cubic. In particular this cannot happen if $v$ is general:
this explains why $c_{10}(\cS)=0$.

A section $\sigma$ of $\cS_s$ is defined by a vector $\bar{u}\in V_{\omega_1}/\CC v$. 
It vanishes at $x\in\OO\PP^2$ if and only if $u$ belongs to
the span of $v$ and the affine tangent space $\hat{T}_x\OO\PP^2$.
Generically, the projective line generated by $u$ and $v$ cuts the Cartan
cubic in three points $y_1, y_2, y_3$, and $\cT_x\OO\PP^2$ must contain
one of these three points. Suppose that this point is $y_1$. Then $x$ 
must be on the entry-locus of $y_1$, a smooth 8-dimensional quadric 
$Q_1$. Moreover $y_1$ must belong to $\cT_x Q_1$, which means that $x$ is on
the polar hyperplane $H_1$ to $y_1$ with respect to $Q_1$. Finally, 
$x$ has to belong to the 7-dimensional quadric $Q_1\cap H_1$.   
The statement \eqref{i} follows.

To check \eqref{ii}, let $Q$ be a smooth 7-dimensional quadric. Observe that
its linear span $\langle Q\rangle$ cannot be contained in $\OO\PP^2$, 
whose maximal linear spaces are only five dimensional by \cite[Corollary 7.19]{landsberg-manivel:projective}. 
Since the Cayley plane
is cut out by quadrics, this implies that $\OO\PP^2\cap\langle Q\rangle
=Q$. If we choose a point $z$ in $\langle Q\rangle \setminus Q$, its entry locus 
$\tilde{Q}$ is an 8-dimensional quadric containing $Q$, which must therefore
be a hyperplane section of $\tilde{Q}$, say $Q=\tilde{Q}\cap H$. Let $y$ be the 
point of $\langle \tilde{Q}\rangle$ polar to $H$ with respect to $\tilde{Q}$.
Since $Q$ is smooth, $y$ does not belong to $\tilde{Q}$, hence not 
to the Cayley plane either. A general line through $y$ will allow
to define a section  $\sigma$ of some $\cS_s$ such that $Q$ is a component 
of $\sQ_\sigma$.
\end{proof}

The following lemma might be of independent interest, and closely
resembles \cite[Theorem 2.3.2]{okonek-schneider-spindler}.
Let us denote by $Q_n$ a smooth quadric hypersurface in $\PP^{n+1}$.
If $n$ is odd, we denote by $\Sigma$ the spinor bundle on $Q_n$.
If $n$ is even, we denote by $\Sigma_+$ and $\Sigma_-$ the two spinor
bundle on $Q_n$ (see \cite{ottaviani:spinor}).

\begin{lem} \label{Q2}
  Let $n \ge 3$ and let $E$ be a vector bundle on $Q_n$.
  Then $E$ splits as $\oplus_i \sO_{Q_n}(a_i)$ if and only if the
  restriction of $E$ to some quadric surface 
$Q_2 \subset Q_n$ splits as $\oplus_i \sO_{Q_2}(a_i)$.
\end{lem}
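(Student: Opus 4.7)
The ``only if'' direction follows immediately by restriction; I focus on the converse. My approach would mimic the classical argument of \cite[Thm.~2.3.2]{okonek-schneider-spindler} on $\PP^n$, replacing Horrocks' splitting criterion by its quadric analogue due to Ottaviani (\cite{ottaviani:spinor}): a vector bundle $E$ on $Q_n$, $n\ge 3$, splits as $\oplus_i \sO_{Q_n}(a_i)$ if and only if the intermediate cohomology groups $H^i(Q_n, E(t))$ and $H^i(Q_n, E(t)\ts\Sigma)$ (respectively with $\Sigma_\pm$ for $n$ even) vanish for every $t\in\ZZ$ and $0<i<n$.

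First I would enlarge the given $Q_2$ into a flag of smooth quadric sections $Q_2\subset Q_3\subset\cdots\subset Q_{n-1}\subset Q_n$, each $Q_{k-1}$ being a smooth hyperplane section of $Q_k$. This is possible because a smooth $Q_2\subset Q_n$ necessarily equals the section $Q_n\cap\PP^3$ of its linear span, and generic hyperplanes containing this $\PP^3$ cut $Q_n$ in a chain of smooth hyperplane sections. Next I would establish the base case on $Q_2\cong\PP^1\times\PP^1$: any line bundle $\sO_{Q_2}(a,a)$ has no middle cohomology, and the same is true after twisting by $\Sigma_\pm|_{Q_2}$, which are the line bundles $\sO_{Q_2}(1,0)$ and $\sO_{Q_2}(0,1)$. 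Hence the splitting hypothesis on $E|_{Q_2}$ yields the vanishing of all intermediate cohomology groups required by Ottaviani's criterion at the bottom of the flag. I would then propagate these vanishings upwards: at each step the hyperplane section sequence
\[
0 \to E(t-1) \to E(t) \to E|_{Q_{k-1}}(t) \to 0,
\]
together with its twist by the spinor bundle on $Q_k$, induces a long exact sequence whose interior terms vanish by induction. For cohomology degrees $2\le i\le k-2$ this produces isomorphisms $H^i(Q_k,E(t-1))\cong H^i(Q_k,E(t))$ which, combined with Serre vanishing for $t\gg 0$, force vanishing for all $t$; the boundary degrees $i=1$ and $i=k-1$ are handled by the standard surjectivity-of-sections argument together with Serre duality. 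Applying Ottaviani's criterion on $Q_n$ would then deliver the desired splitting.

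The main obstacle I anticipate is the bookkeeping of the spinor bundle cohomology through the induction, since the spinor bundle on $Q_k$ does not simply restrict to the spinor bundle on $Q_{k-1}$ but rather splits as a direct sum of spinor bundles---or remains a single spinor bundle---depending on the parity of $k$, following Ottaviani's restriction formulas. A careful parity argument along the chain is therefore needed to ensure that all spinor-twisted vanishings are lifted consistently from $Q_2$ up to $Q_n$.
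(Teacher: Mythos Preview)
Your approach is sound and can be completed, but the paper takes a shortcut that sidesteps precisely the spinor bookkeeping you anticipate as the main obstacle. Like you, the paper reduces to the inductive step ``if $E|_{Q_{n-1}}$ splits as $\oplus_i\sO_{Q_{n-1}}(a_i)$ then $E$ splits on $Q_n$'' and runs the same hyperplane-section long exact sequence to kill the intermediate cohomology of $E(t)$. But instead of also establishing the spinor-twisted vanishings required by Ottaviani's full splitting criterion, the paper stops once $E$ is shown to be ACM and invokes Kn\"orrer's classification \cite{knorrer:ACM}: any ACM bundle on $Q_n$ is a direct sum of line bundles and twisted spinor bundles. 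Spinor summands are then excluded in one stroke, since the restriction to $Q_{n-1}$ of a spinor bundle on $Q_n$ is again a (sum of) spinor bundle(s) and therefore can contain no summand of the form $\sO_{Q_{n-1}}(a_i)$. So the paper trades your longer spinor-twisted induction for a shorter ACM argument plus a one-line endgame; both routes are valid, but the paper's avoids the parity tracking entirely.
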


The proof closely resembles the argument for $\PP^n$.
We provide the argument for the reader's convenience.  

\begin{proof}
  One direction is obvious, and it suffices to prove that $E$ splits
  as $\oplus_i \sO_{Q_n}(a_i)$ if the restriction $E_{|Q_{n-1}}$ to a 
codimension one quadric splits as
  $\oplus_i \sO_{Q_{n-1}}(a_i)$, for $n \ge 3$.
  Consider the exact sequence:
  \[
  0 \to E(t-1) \to E(t) \to E_{|Q_{n-1}}(t) \to 0.
  \]
  Since $E_{|Q_{n-1}}$ splits as $\oplus_i \sO_{Q_{n-1}}(a_i)$, we
  have $H^k(Q_{n-1},E_{|Q_{n-1}}(t))=0$ for 
  $0<k<n-1$ and for all $t\in \ZZ$.
  Moreover, since $E$ is locally free we must have $H^k(Q_n,E(t))=0$ for
  all $k<n$ and $t \ll 0$ as well as for $k>0$ and $t \gg 0$.
  We deduce that $H^k(Q_n,E(t))=0$ for $1 < k < n-1$ and for all $t\in \ZZ$.
  Note that $H^1(Q_{n-1},E_{|Q_{n-1}}(t))$ vanishes for all $t\in \ZZ$
  because $n\ge 3$, so that,  for any integer $t$, $H^k(Q_n,E(t-1))$ maps onto
  $H^k(Q_n,E(t))$.
  This gives $H^1(Q_{n},E(t))=0$ for all $t\in \ZZ$. 
  Applying the argument to $E^*$ and using Serre duality, one gets
  $H^{n-1}(Q_{n},E(t))=0$ for all $t\in \ZZ$. 

  Therefore,
 by \cite{knorrer:ACM}, $E$ splits as the direct sum of line
  bundles, plus a direct sum of twisted spinor bundles.
  But the restriction to $Q_{n-1}$ of none of the bundles $\Sigma$, $\Sigma_+$,
  $\Sigma_+$ can contain any of the summands $\sO_{Q_{n-1}}(a_i)$
  (see \cite{ottaviani:spinor} for the behavior of the restriction of
  spinor bundles to linear sections).
  So no spinor
  bundle occurs in the decomposition of $E$, and we are done.
\end{proof}

The next proposition is the main ingredient of the proof of our main
result.
Roughly speaking, it uses the lemmas of the previous section to 
describe the restriction to
7-dimensional quadrics of a complex orthogonal to our collection.
We will write $\bR \Phi$ the right-derived functors of a left-exact functor
$\Phi$, and similarly for the left-derived $\bL \Phi$ of a right-exact functor.
Given a subvariety $Z$ of $X$, we will denote by $i_Z$ the embedding of 
$Z$ in $X$.

\begin{prop} \label{U}
  Let $E$ be an object of the subcategory ${}^\perp\sD$ of $ \Db(X)$.
  Let $Q=Q_7$ be a smooth 7-dimensional quadric  in $X$.
  Then $\bL i_Q^*(E)$ is a direct sum of shifts of $\sO_{Q}(11)$.
\end{prop}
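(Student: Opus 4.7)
My plan is to combine the geometric description of $Q$ given by Lemma \ref{3quadrics} with the generation results of Section \ref{1}, and read off the structure of $\bL i_Q^*(E)$ through Kapranov's exceptional collection on $\Db(Q_7)$.

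First, by Lemma \ref{3quadrics}(\ref{ii}) I would pick a section $\sigma$ of $\cS_s$ for which $Q$ is one of the three disjoint components of $\sQ_\sigma$. The Koszul complex of the regular section $\sigma$ of the rank-$9$ bundle $\cS_s$ yields a resolution
\[
0 \to \wedge^9 \cS_s^* \to \cdots \to \cS_s^* \to \sO_X \to i_{\sQ_\sigma,*}\sO_{\sQ_\sigma} \to 0,
\]
while the sequence $0 \to \cS_s^* \to \cS^* \to \sO_X \to 0$ coming from the nowhere-vanishing section $s$ of $\cS$, together with its wedge powers, resolves each $\wedge^k \cS_s^*$ by the bundles $\wedge^j\cS^*$ for $j \leq k$. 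Splicing the two families and twisting by $\sO_X(a)$, the direct summand $i_{Q,*}\sO_Q(a)$ of $i_{\sQ_\sigma,*}\sO_{\sQ_\sigma}(a)$ is represented by a total complex involving only the twisted wedges $\wedge^j \cS^*(a)$ for $0 \leq j \leq 9$.

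Next, using the duality $\wedge^k \cS^* \cong \wedge^{10-k}\cS^*(5-k)$ to reduce the cases $k \geq 6$ to $k \leq 4$, Lemmas \ref{6term}--\ref{S3} and Corollary \ref{wedge-tensor} give $\wedge^j \cS^*(a) \in \sD$ for $j \in \{0,1,2,3,4,6,7,8,9\}$ and $a = 5,\ldots,10$. The remaining middle wedge $\wedge^5 \cS^*(a)$, not directly treated by the lemmas, can be handled by realizing $\wedge^5 \cS$ as a direct summand of a tensor product such as $\cS\otimes \wedge^4 \cS$ or $\wedge^2\cS\otimes \wedge^3\cS$ via Spin$(10)$-representation theory, and then applying Corollary \ref{wedge-tensor}. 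Thus $i_{Q,*}\sO_Q(a) \in \sD$ for $a=5,\ldots,10$, so $E \in {}^\perp \sD$ together with the adjunction $\bR\Hom_X(E, i_{Q,*} F) = \bR\Hom_Q(\bL i_Q^* E, F)$ yields $\bR\Hom_Q(\bL i_Q^*(E), \sO_Q(a)) = 0$ for those $a$.

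Finally, within the twisted Kapranov collection $\langle \Sigma(5), \sO_Q(5), \ldots, \sO_Q(11)\rangle$ of $\Db(Q_7)$, these vanishings place $\bL i_Q^*(E)$ in the rank-two left orthogonal ${}^\perp\langle \sO_Q(5), \ldots, \sO_Q(10)\rangle$, which is generated by $\sO_Q(11)$ together with a left mutation of $\Sigma(5)$. To exclude any spinor-type contribution, I would further restrict to a smooth $2$-dimensional quadric $Q_2\cong \PP^1\times\PP^1$ inside $Q$: a parallel Koszul/generation argument on $Q_2$ (where no spinor bundle exists) shows that $\bL i_{Q_2}^*(E)$ is a sum of $\sO_{Q_2}(c,c)$-line bundles, and Lemma \ref{Q2} lifts this splitting back up to $Q$. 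Combined with the earlier vanishings, the only possible summand is $\sO_Q(11)$.

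The main obstacle is controlling the middle wedge $\wedge^5\cS^*(a)$, which requires tensor-product decompositions from Spin$(10)$-representation theory combined with Corollary \ref{wedge-tensor}; the resulting twist range $5\le a\le 10$ must then be balanced against the Kapranov collection on $Q_7$ (via the mutations and the restriction-to-$Q_2$ argument) to pin down $\sO_Q(11)$ as the unique surviving line bundle.
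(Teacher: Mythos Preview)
Your overall strategy matches the paper's, but there are two places where the argument either takes an unnecessary detour or does not go through.

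\medskip
\textbf{The middle wedge $\wedge^5\cS^*$.} This obstacle is self-inflicted: you apply the duality $\wedge^k\cS^*\cong\wedge^{10-k}\cS^*(5-k)$ on the rank-$10$ bundle $\cS$, which is stationary at $k=5$. The paper instead applies duality to the rank-$9$ bundle $\cS_s$, namely $\wedge^p\cS_s^*\cong\wedge^{9-p}\cS_s(-5)$. For $p\ge 5$ this rewrites $\wedge^p\cS_s^*$ in terms of $\wedge^{9-p}\cS_s$ with $9-p\le 4$, and the exact sequence $0\to\sO_X\to\cS\to\cS_s\to 0$ then filters $\wedge^{9-p}\cS_s$ by $\wedge^j\cS$ with $j\le 4$. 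So only $\wedge^j\cS^*(t)$ for $j\le 4$ is ever needed, and Lemmas~\ref{wedge2}--\ref{wedge4} suffice. Your proposed workaround via $\cS\otimes\wedge^4\cS$ and Corollary~\ref{wedge-tensor} does not obviously cover the full range $5\le a\le 10$: Corollary~\ref{wedge-tensor} only gives $\wedge^4\cS^*\otimes\cS^*(t)\in\sD$ for $5\le t\le 7$, and the other irreducible summands of that tensor product would each need their own argument.

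\medskip
\textbf{Eliminating the spinor contribution.} This is the more serious gap. After showing $\bR\Hom_Q(\bL i_Q^*E,\sO_Q(a))=0$ for $5\le a\le 10$, you are left in a rank-two orthogonal, and you propose to kill the spinor piece by restricting to a smooth $Q_2\subset Q$ and invoking Lemma~\ref{Q2}. But Lemma~\ref{Q2} is a splitting criterion for a single vector bundle, not for an object of $\Db(Q)$; at this stage you have no control over the cohomology sheaves of $\bL i_Q^*E$. Moreover, on $Q_2\cong\PP^1\times\PP^1$ the diagonal line bundles $\sO_{Q_2}(a,a)$ do not generate $\Db(Q_2)$, so orthogonality to them does not force $\bL i_{Q_2}^*E$ to be a sum of such line bundles. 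The paper bypasses this entirely by using the restriction formula
\[
\cS_{|\sQ}\;\cong\;\sO_\sQ\oplus\sO_\sQ(1)\oplus\Sigma_\sQ,
\]
which exhibits $\Sigma_\sQ$ as a summand of $\cS_{|\sQ}$. Tensoring the Koszul resolution of $j_*\sO_\sQ$ by $\cS^*(7)$ then shows $j_*(\Sigma_\sQ(6))\in\sD$, provided $\wedge^p\cS_s^*\otimes\cS^*(7)\in\sD$ for $0\le p\le 9$; this is exactly what Corollary~\ref{wedge-tensor} is designed for. With $\Sigma(6)$ added to the list of objects on $Q$ orthogonal to $\bL i_Q^*E$, one is in the genuine left orthogonal ${}^\perp\sU=\langle\sO_Q(11)\rangle$, and the conclusion is immediate.
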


\begin{proof} 
  By Lemma \ref{3quadrics}, 
  there exist $s \in H^0(X,\cS)$ and $\sigma \in H^0(X,\cS_s)$ such that
  $Q$ is a component of $\sQ=\sQ_\sigma$,
  so that  $\sQ$ is the disjoint union of $Q$ and two
  additional 7-dimensional quadrics $Q'$ and $Q''$. 
  Denote by $\Sigma_\sQ$
  the sheaf on $\sQ$ that restricts to the spinor bundle on $Q$,
  $Q'$ and $Q''$, and set $j=i_\sQ$.

    Consider the following subcategories of $\Db(Q)$ and of $\Db(\sQ)$:
    \begin{align*}
      \sU & = \langle
      \sO_Q(5),\sO_Q(6),\Sigma(6),\sO_Q(7),\sO_Q(8),\sO_Q(9),\sO_Q(10) \rangle,\\
      \sV & = \langle
      \sO_\sQ(5),\sO_\sQ(6),\Sigma_\sQ(6),\sO_\sQ(7),\sO_\sQ(8),\sO_\sQ(9),\sO_\sQ(10) \rangle.
    \end{align*}
    What we have to prove is that $\bL i_Q^*(E)$ lies in ${}^\perp \sU$.
    Indeed, by \cite{kapranov:derived-homogeneous}, we have
    $\Db(Q) = \langle \sU ,\sO_Q(11) \rangle$,
    so that ${}^\perp \sU = \langle \sO_Q(11) \rangle$. Hence any 
    object of ${}^\perp \sU$ is a direct sum of shifts of $\sO_Q(11)$.
    
    To prove that $\bL i_Q^*(E)$ lies in ${}^\perp \sU$, it is enough
    to show that $\bL j^*(E)$ lies in ${}^\perp \sV$.
    In order to achieve this, since $E$ lies in ${}^\perp \sD$,
    it suffices to prove that, given a generator $v$ of
    $\sV$, the sheaf $\bR j_*(v) \cong j_*(v)$ lies in $\sD$.
    So we have to show that:
    \begin{align}
      \label{5-10}
      & j_*(\sO_\sQ(t)) \in \sD && \mbox{for $5\le t \le 10$,}  \\
      \label{sigma7}
      & j_*(\Sigma_\sQ(6)) \in \sD. 
    \end{align}
    
    To accomplish this task, 
    we first note that $\cS_{|\sQ} \cong \sO_\sQ \oplus \sO_\sQ(1) \oplus \Sigma_\sQ$.
    Then, we consider the Koszul complex of the global section $\sigma$ of
    $\cS_s$:
    \[
    0 \to \wedge^9 \cS_s^*\cong\sO_X(-5) \to \wedge^8 \cS_s^* \to \cdots \to \wedge^2 \cS_s^*
    \to \cS_s^* \to \sO_X \to j_*(\sO_\sQ) \to 0.
    \]
    It is now clear that, to prove \eqref{5-10} and \eqref{sigma7} it suffices to
    prove, respectively, that:
    \begin{align}
      \label{F(t)}
      & \wedge^p \cS_s^*(t) \in \sD && \mbox{for $0 \le p \le 9$,} &&
      \mbox{and for $5\le t \le 10$,} \\
      \label{S(t)}
      & \wedge^p \cS_s^* \ts \cS^*(7) \in \sD, && \mbox{for $0 \le p \le 9$.}
    \end{align}
    
    Let us first focus on \eqref{F(t)}.
    Taking exterior powers of \eqref{F}, and of its dual, we get:
    \[
    \wedge^p \cS_s^* \in \langle \wedge^p \cS^*, \wedge^{p-1} \cS^*, \ldots,
    \sO_X \rangle. 
    \]
    Note also that $\wedge^p \cS_s^* \cong \wedge^{9-p} \cS_s(-5)$.
    We deduce that \eqref{F(t)} holds as soon as:
   \[   \wedge^p \cS^*(t) \in \sD, \quad \mbox{and} \quad \wedge^p
      \cS^*(p+t-5) \in \sD, \quad \mbox{for $0 \le p \le 4$ and $5 \le
        t \le 10$}.
      \]
    This last fact is clear for $p=0,1$. Moreover, it follows from  Lemma \ref{wedge2}
  for $p=2$, Lemma \ref{wedge3} for $p=3$ and Lemma \ref{wedge4} for $p=4$.

    We now turn to the proof of
    \eqref{S(t)}.
    By the same argument as above, we reduce the statement to:
    \[
      \wedge^p \cS^*(7) \ts \cS^* \in \sD, \quad \mbox{and} \quad \wedge^p
      \cS^*(p+2) \ts \cS^* \in \sD, \quad \mbox{for $0 \le p \le 4$}.
      \]
    This is clear for $p=0$. For $p=1$, it amounts to:
    \[
    \wedge^2 \cS^*(7), \cS_2^*(7),\sO_X(7),
    \wedge^2 \cS^*(3), \cS_2^*(3),\sO_X(2) \in \sD,
    \]
    which follows from Lemma \ref{wedge2} and Lemma \ref{6term}.
    For $p=2,3,4$, we can apply  Corollary \ref{wedge-tensor}, and we are done.
\end{proof}

The last lemma that will be useful to us here is an analogue of
\cite[Theorem 3.2.1]{okonek-schneider-spindler}.

\begin{lem} \label{incidence}
  Fix a point $x \in X$,
  and let $E$ be an object of $\Db(X)$ such that, 
  for any 8-dimensional quadric $Q_8 \subset X$ through $x$, we
  have 
  \[
  \bL i_{Q_8}^* (E) \cong \bigoplus_k \sO^{r_k}_{Q_8}[-k]
  \]
 for some integers $r_k$. Then:
  \[E \cong \bigoplus_{k}\sO_X^{r_k}[-k]. \]
\end{lem}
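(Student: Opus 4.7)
The plan is to adapt the classical argument of \cite[Thm.~3.2.1]{okonek-schneider-spindler} for $\PP^n$, replacing the family of lines through $x$ by the family of $8$-dimensional quadrics through $x$. The parameter space for such quadrics is $Y := \check Q_x$, itself a smooth $8$-dimensional quadric. I would form the incidence
\[
\widetilde X = \{(y,\ell) \in X \times Y : y \in Q_\ell\},
\]
with the two projections $f : \widetilde X \to X$ and $q : \widetilde X \to Y$. The morphism $q$ is a smooth proper quadric fibration (so $\widetilde X$ is smooth), whereas $f$ is surjective and birational: by the octonionic plane geometry recalled in the introduction, two general points of $X$ lie on a unique $\OO$-line, so the generic fiber of $f$ is a point; the only positive-dimensional fiber is $f^{-1}(x) = Y$. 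There is a tautological section $s : Y \to \widetilde X$, $\ell \mapsto (x,\ell)$, satisfying $q\circ s = \id_Y$ and $f \circ s = \mathrm{const}_x$, playing the role of the "vertex" of the pencil.

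The heart of the proof consists in computing $\bR q_* f^* E$. By hypothesis, $f^* E|_{q^{-1}(\ell)} \cong \bigoplus_k \sO_{Q_\ell}^{r_k}[-k]$ for every $\ell \in Y$, and since $H^{>0}(Q_8,\sO)=0$, cohomology and base change shows that $\bR q_* f^* E$ has locally free cohomology sheaves of rank $r_k$ on $Y$. Now I apply $\bL s^*$ to the adjunction counit $\bL q^* \bR q_* f^* E \to f^* E$. Using $q\circ s=\id_Y$ and $f \circ s = \mathrm{const}_x$, and observing that $\bL i_x^* E \cong \bigoplus_k \CC^{r_k}[-k]$ (restrict the hypothesis to any one $Q_8$ through $x$, then to $x$), this produces a natural map
\[
\varphi : \bR q_* f^* E \longrightarrow \bL(\mathrm{const}_x)^* E \cong \bigoplus_k \sO_Y^{r_k}[-k].
\]
Its fiber at $\ell \in Y$ is the evaluation-at-$x$ map $H^0(Q_\ell,\sO_{Q_\ell}^{r_k}) \to \sO_{Q_\ell}^{r_k}|_x$, manifestly an isomorphism. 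Both source and target are perfect complexes on the smooth variety $Y$, so a fiberwise quasi-isomorphism is a quasi-isomorphism, and $\bR q_* f^* E \cong \bigoplus_k \sO_Y^{r_k}[-k]$. Applying the same perfect-complex criterion to the counit itself on $\widetilde X$ (the map is fiberwise the identity on $\bigoplus_k \sO_{Q_\ell}^{r_k}[-k]$) yields
\[
f^* E \;\cong\; \bL q^* \Bigl(\bigoplus_k \sO_Y^{r_k}[-k]\Bigr) \;\cong\; \bigoplus_k \sO_{\widetilde X}^{r_k}[-k].
\]
Finally, $\bR f_*$ combined with the projection formula and $\bR f_* \sO_{\widetilde X} \cong \sO_X$ gives $E \cong \bR f_* f^* E \cong \bigoplus_k \sO_X^{r_k}[-k]$.

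The main obstacle is precisely the assertion $\bR f_* \sO_{\widetilde X} \cong \sO_X$. Since $f$ is a birational morphism between smooth projective varieties, $f_* \sO_{\widetilde X} = \sO_X$ is automatic, so only the vanishing $R^{>0} f_* \sO_{\widetilde X} = 0$ needs to be checked. The only positive-dimensional fibers of $f$ are the exceptional fiber $f^{-1}(x) = Y$, a smooth $8$-quadric with $H^{>0}(\sO_Y)=0$, and the special loci $\check Q_x \cap \check Q_y$ for those $y$ through which more than one $\OO$-line from $x$ passes; these are linear sections of $Y$, rational in particular, and likewise have no higher cohomology of the structure sheaf. The vanishing then follows from the theorem on formal functions, or alternatively from Grauert--Riemenschneider applied to $f$. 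Once this point is settled, the remainder of the argument is a formal manipulation with perfect complexes, base change, and the fiberwise-iso criterion.
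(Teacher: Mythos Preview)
Your argument is correct and essentially the same as the paper's: the same incidence variety $\widetilde X=\cB_x$ over $Y=\check Q_x$, the same section $s=\xi$, the same adjunction counit $\bL q^*\bR q_*\,\bL f^*E\to \bL f^*E$ verified fibrewise over $\check Q_x$, and the same final step via $\bR f_*\sO_{\widetilde X}\cong\sO_X$. The paper dispatches that last isomorphism in one line from birationality between smooth varieties, so your fibre-by-fibre description of the exceptional locus is unnecessary once you invoke Grauert--Riemenschneider (or simply that smooth varieties have rational singularities).
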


\begin{proof}
The argument follows the 
proof given in \cite{okonek-schneider-spindler} that any bundle on the projective plane, which is trivial
on any line through a given point, must be trivial. This proof applies 
almost verbatim since the Cayley plane and its dual define a kind of plane projective
geometry. Let:
$$\cB_x=\{(y,\ell)\in \OO\PP^2\times \OO\check{\PP}^2, \quad x,y\in Q_\ell\}.$$
The second projection $q : \cB_x\to\OO\check{\PP}^2$ maps $\cB_x$ to $\check{Q}_x$, 
and makes of $\cB_x$ a locally trivial fibration in smooth
8-dimensional quadrics  over $\check{Q}_x$.
In particular, $\cB_x$ is smooth of dimension $16$. The first projection $p : 
\cB_x\to\OO\PP^2$ is birational since a general point of the Cayley plane 
belongs to a unique $\OO$-line $Q_\ell$ passing through $x$. 
Recall also that $q$ has a
section $\xi$ defined by $\xi(\ell)=(x,\ell)$ so $p\circ \xi$ is the
constant map $x$, while of course $q \circ \xi$ is the identity.
\[
\xymatrix@-3ex{
& \cB_x \ar_-{p}[dl] \ar_-{q}[dr]& \\
X && \ar@/_/_-{\xi}[ul] \check{Q}_x 
}
\] 

Just as in \cite{okonek-schneider-spindler}, we would like to find a
complex $F$ in $\Db(\check{Q}_x)$ such that:
\begin{equation}
  \label{E-F}
\bL q^*(F) \cong \bL p^*(E).  
\end{equation}

Let us first check that this implies our statement. 
Note that, by \eqref{E-F}, we have:
\[
F \cong \bL (q \circ \xi)^* (F) \cong  \bL \xi^* (\bL q ^* (F)) \cong 
\bL \xi^* (\bL p ^* (E)) \cong \bL (p \circ \xi)^*(E) \cong
\bigoplus_k \sO_{\check{Q}_x}^{r_k}[-k],
\]
where the last isomorphism holds since $p \circ \xi$ has constant
value $x$, so $\bL (p \circ \xi)^*(E)$ is obtained as $(p \circ \xi)^*(\bL
i^*_{x}(E))$, and clearly $\bL i^*_{x}(E) \cong \bigoplus_k \CC_x^{r_k}[-k],$
by restriction from $Q_8$, since all the $\bL^k i^*_{Q_8}(E)$ are free.
Hence, we get:
\[
\bL p^*(E) \cong \bL q^* (F) \cong \bigoplus_k \sO_{\cB_x}^{r_k}[-k],
\]
and, in turn:
\[
E \cong \bR  p _* (\bL p^*(E)) \cong \bigoplus_k \bR
p_*(\sO_{\cB_x})^{r_k}[-k] \cong \bigoplus_k \sO_{X}^{r_k}[-k],
\]
where the last isomorphism is clear, since $p$ being birational, 
$\bR p_*(\sO_{\cB_x}) \cong \sO_X$.

\medskip

It remains to prove \eqref{E-F}.
We set:
\[
F = \bR q_* (\bL p^*(E)),
\]
and we have a natural morphism in $\Db(\cB_x)$:
\[
\varphi : \bL q^*(F) = \bL q^*(\bR q_* (\bL p^*(E))) \to \bL p^*(E).
\]
We would like to prove that, for any $\ell \in \check{Q}_x$, 
this morphism restricts to an isomorphism in the derived category
$\Db(Q_\ell)$ of the fibre $Q_\ell$ of $q$ over $\ell$, and this will
finish the proof since these fibres cover $\cB_x$.
Denote by $\alpha$ the embedding of $\{\ell\}$ into $\check{Q}_x$, by $\beta$
the restriction of $q$ to $Q_\ell \to \{\ell \}$ and by $\gamma$ the
embedding $Q_\ell \to \cB_x$ so that $p \circ \gamma = i_{Q_\ell}$.
We have to prove that $\bL \gamma^*(\varphi)$ is an isomorphism.
We write $\bL \gamma^*(\varphi)$ as:
\begin{equation}
  \label{phi}   \bL \gamma^*\bL q^*\bR q_* \bL p^*E \to \bL \gamma^*\bL p^*E.
\end{equation}
On the right-hand-side, we have natural isomorphisms:
\[  \bL \gamma^*\bL p^*E \cong \bL (p \circ \gamma)^*E \cong
  \bL i_{Q_\ell}^*E \cong \bigoplus_{k}\sO_{Q_\ell}^{r_k}[-k].
\]
On the left-hand-side of \eqref{phi}, we have natural isomorphisms:
\[
  \bL \gamma^*\bL q^*\bR q_* \bL p^*E \cong
  \bL \beta^* \bL \alpha^* \bR q_* \bL p^* E \cong 
  \bL \beta^* \bR \beta_* \bL \gamma^* \bL p^* E \cong 
  \bL \beta^* \bR \beta_*  \bL i_{Q_\ell}^*E,
\]
where the middle one is given by smooth base-change.
So $\bL \gamma^*(\varphi)$ is the natural map:
\[
\bL \beta^* \bR \beta_*  \bL i_{Q_\ell}^*E \to \bL i_{Q_\ell}^*E,
\]
which is clearly an isomorphism since 
$\bL i_{Q_\ell}^*E \cong \bigoplus_{k}\sO_{Q_\ell}^{r_k}[-k]$.
\end{proof}

\section{Proof of the main result}

\label{3}

Here we prove that our subcategory $\sD$ generates the whole $\Db(X)$.
We have to show that ${}^\perp \sD = 0$.
The idea, inspired on an argument appearing in
\cite{bondal-orlov:semiorthogonal-arxiv},
is to restrict $E$ to the family of 8-dimensional quadrics in $X$ through a
given point.
So let $E$ be a complex of coherent sheaves on $X$, lying in
  ${}^\perp \sD = 0$, and let us prove that $E=0$.

We let $x$ be a point of $X$ such that, for all $k$, $\cH^k(E)$ is locally free around
$x$. We have:
\[
\bL i_x^* (E) \cong \bigoplus_k  \bL^k i_x^*(E)[-k]
\cong  \bigoplus_k \cH^k(E)_{|x}[-k] \cong  \bigoplus_k \CC_x^{r_k}[-k],
\]
for some integers $r_k$, with $r_k \ne 0$ for finitely many $k$'s.

\medskip

Let us first use 7-dimensional quadrics. So let $Q_7$ be a smooth
7-dimensional quadric contained in $X$ and passing through $x$.
In view of Proposition \ref{U}, $\bL i^*_{Q_7} (E)$ is the direct sum 
  of shifts of $\sO_{Q_7}(11)$, so that:
  \[
  \bL i^*_{Q_7}(E(-11)) \cong \bigoplus_k \bL^k i^*_{Q_7}(E(-11))[-k]
  \cong \bigoplus_k \sO^{t_k}_{Q_7}[-k],
  \]
  for some (finite) sequence of integers $t_k$.
  All of these cohomology sheaves are free, so restricting from $Q_7$ to $x$ each of the
  $\bL^k i^*_{Q_7}(E(-11))$ we must get $\bL^k i^*_{x}(E(-11)) \cong
  \CC_x^{r_k}$.
  Hence $t_k=r_k$ for each $k$.

  Further, note that, given another smooth 7-dimensional
  quadric $Q_7' \subset X$, meeting $Q_7$, we will have:
  \[
  \bL i^*_{Q_7'}(E(-11)) \cong \bigoplus_k \bL^k i^*_{Q_7'}(E(-11))[-k]
  \cong \bigoplus_k \sO^{r_k}_{Q_7'}[-k],
  \]
  for the same sequence of
  integers $r_k$.
  Indeed, the rank of the restricted free sheaves to any
  point of $Q_7 \cap Q_7'$ must
  agree.

  \medskip
  Now let us move to 8-dimensional quadrics.
  For any smooth such quadric $Q_8 \subset X$ containing $x$, we consider the (finitely
  many) non-zero sheaves
  $\bL^k i_{Q_8}^*(E)$.
  We get finitely many proper subschemes of $Q_8$ as torsion loci of
  these sheaves, and we denote by $\tau(\bL^k i_{Q_8}^*(E))$ the
  torsion locus of each $\bL^k i_{Q_8}^*(E)$.
  So, we may find a family of smooth hyperplane sections $Q_7$ of $Q_8$,
  covering the whole $Q_8$, such
  that, for any given $k$, $\tau(\bL^k i_{Q_8}^*(E))$  does not contain
  $Q_7$. Let $f$ be the linear form cutting $Q_7$ in $Q_8$.
  We have a long exact sequence:
  \[
  \cdots \to \bL^{k-1} i^*_{Q_7} (E) \to
  \bL^k i^*_{Q_8} (E(-1)) \xr{f}  \bL^k i^*_{Q_8} (E)  \to
  \bL^k i^*_{Q_7} (E)  \to \bL^{k+1} i^*_{Q_8} (E(-1))  \to \cdots 
  \]
  Since $\tau(\bL^k i_{Q_8}^*(E)) \not \supset Q_7$ for all $k$, all
  the maps $f=\bL^k i_{Q_8}^*(f)$ in the above sequence are injective.
  So $\bL^k i^*_{Q_7} (E) \cong (\bL^k i^*_{Q_8} (E))_{| Q_7}$, and we
  may split the above long sequence into short exact sequences:
  \begin{equation}
    \label{restriction}
    0  \to
    \bL^k i^*_{Q_8} (E(-1)) \xr{f}  \bL^k i^*_{Q_8} (E)  \to
    \bL^k i^*_{Q_7} (E) \to 0.
  \end{equation}
  Using Proposition \ref{U}, this means that, for any $k$, the sheaf $\bL^k i^*_{Q_8}(E(-11))$ restricts to
  $Q_7$ as $\sO_{Q_7}^{r_k}$.

  We have a similar exact sequence for any other $Q_7'$ in our covering family of hyperplane
  sections of $Q_8$.
  Since all these sections $Q_7'$ obviously meet $Q_7$, 
  using again \eqref{restriction} we get 
  that, for any $k$, the sheaf $\bL^k i^*_{Q_8}(E(-11))$ restricts to
  $Q_7'$ as $\sO_{Q'_7}^{r_k}$ (for the same integers $r_k$).
  In particular $\bL^k i^*_{Q_8}(E(-11))$ is locally free of rank
  $r_k$.
  Lemma \ref{Q2} now gives $\bL^k i^*_{Q_8}(E(-11)) \cong \sO_{Q_8}^{r_k}$.

  \medskip
  We have thus, for all $k$, an isomorphism  $\bL^k i^*_{Q_8}(E(-11)) \cong \sO_{Q_8}^{r_k}$.
  We would like to conclude that $E(-11)$ is itself isomorphic to
  $\bigoplus_k \sO_{X}^{r_k}[-k]$.  
  To achieve this, consider 
  any other smooth 8-dimensional
  quadric $Q'_8$  contained in $X$.
  By the same argument as above, we get that, given any $k$, the sheaf
  $\bL ^k i^*_{Q_8'} (E(-11))$ is free.
  The rank of this free sheaf must be $r_k$, since $Q_8$ and
  $Q'_8$ meet at $x$, and restricting
  $\bL ^k i^*_{Q_8'} (E(-11))$ to $x$ must give $\bL^k i^*_{x}
  (E(-11)) \cong \CC^{r_k}$.

  \medskip
  We are now in position to apply Lemma \ref{incidence} to obtain an isomorphism
  $E(-11) \cong \bigoplus_k \sO_{X}^{r_k}[-k]$, so that $E \cong \bigoplus_k \sO_{X}^{r_k}(11)[-k]$.
  But $E$ belongs to ${}^\perp \sD$, in particular
  $\Ext^i_X(E,\sO_X(11))=0$ for all $i$. Therefore $r_k=0$ for all $k$,
  so $E=0$.
  The proof is now complete.

\bibliographystyle{alpha}
\bibliography{category-cayley}

\end{document}